\documentclass[12pt]{article}
\usepackage[top=1in, bottom=1in, left=1in, right=1in]{geometry}
\usepackage{xcolor}
\usepackage{geometry}
\usepackage{amssymb,bm}
\usepackage{amsmath}
\usepackage{amsthm}
\usepackage{graphicx}
\usepackage{hyperref}
\usepackage{caption}
\usepackage{float} 
\hypersetup{pdfborder=0 0 0}
\usepackage{bbm}
\usepackage{physics}

\newtheorem{theorem}{{\sc Theorem}}[section]

\newtheorem{lemma}[theorem]{{\sc Lemma}}

\newtheorem{remark}[theorem]{Remark}

\newcommand{\RR}{\mathbb{R}}

\newcommand{\n}{\noindent}
\newcommand{\comment}[1]{}

\DeclareMathAlphabet{\pazocal}{OMS}{cmsy}{m}{n}

\newcommand{\xx}{\mathsf{x}}
\newcommand{\yy}{\mathsf{y}}
\newcommand{\zz}{\mathsf{z}}
\newcommand{\ff}{\mathsf{f}}
\newcommand{\pp}{\mathsf{p}}
\newcommand{\hh}{\mathsf{h}}
\newcommand{\ww}{\mathsf{w}}

\title{On the potential lack of response in a model of second-harmonic generation. A computer-assisted proof. }
\author{
Miguel Ayala
\footnote{Department of Mathematics and Statistics, McGill University, Montreal, QC, Canada. {\tt (miguel.ayala@mail.mcgill.ca)}}, 
\ \
Dominic Blanco \footnote{Department of Mathematics, Rutgers University, New Brunswick, NJ, USA (dominic.blanco@rutgers.edu)}, 
\ \ 
Fioralba Cakoni\footnote{Department of Mathematics, Rutgers University, New Brunswick, NJ, USA (fc292@math.rutgers.edu)},  
\\
Narek Hovsepyan\footnote{School of Mathematical and Statistical Sciences, University of Texas Rio Grande Valley, Edinburg, TX, USA (narek.hovsepyan@utrgv.edu)}, \ \ and 
\ \ 
Michael S. Vogelius\footnote{Department of Mathematics, Rutgers University, New Brunswick, NJ, USA (vogelius@math.rutgers.edu)}}
\date{}

\begin{document}
\maketitle

\begin{center}
Dedicated to the memory of Robert V. Kohn and his profound contributions to Continuum Mechanics and Nonlinear Science.
\end{center}

\begin{abstract}
This paper provides a rigorous computer-assisted proof of the existence of generalized transmission eigenvalues arising in nonlinear optics in the context of high-order harmonic generation, a result conjectured in \cite{chlv23}. The analysis is carried out for a one-dimensional nonlinear medium, where the problem reduces to a coupled system of nonlinear homogeneous ordinary differential equations subject to nonstandard boundary conditions. These eigenvalues correspond to probing frequencies $\omega$ for which there exists a nontrivial incident $\omega$-wave such that the second-harmonic field generated does not persist outside the compact support of the nonlinear medium, thereby rendering its nonlinear properties undetectable to an external observer. Building on earlier numerical evidence, we establish that, in the low-frequency regime, there exist generalized transmission eigenvalues whose associated eigenfunctions exhibit blow-up behavior as the frequency tends to zero. The proof combines analytical arguments with validated numerics, employing a Newton–Kantorovich framework together with interval arithmetic to rigorously control approximation errors.  The algorithmic implementation underlying the computer-assisted proof is made available on GitHub \cite{TransmissionEigenvalues.jl}.
\end{abstract}

\section{Introduction}
\setcounter{equation}{0}
Higher-order harmonic generation is a nonlinear model describing higher-order optical harmonic generation in bulk crystals \cite{boyd,mol}. Such media, when probed with monochromatic laser beams, generate waves at new frequencies. This process plays a central role in modern optics, with applications ranging from frequency conversion to imaging and sensing. The most common example is perhaps the green laser pointer, which emits frequency-doubled green light based on an infrared laser source and an internal (second-harmonic generating) crystal. More specifically, the second-harmonic generation (SHG) process arises in crystals with particular symmetries, in which the interaction of an incident wave of frequency $\omega$ with this nonlinear medium produces a wave at frequency $2\omega$ \cite{chlv23}. Mathematically, SHG is described by a system of coupled nonlinear wave equations in the frequency domain with quadratic nonlinearity, where interactions between different frequency components are determined by the material's nonlinear susceptibilities.  In laser technology, SHG is often modeled as a one-dimensional problem by considering incident waves impinging on the nonlinear medium at normal incidence \cite{boyd}.  This work focuses on a one-dimensional model of SHG corresponding to wave propagation in a bounded nonlinear medium, and it is a follow-up to the investigation in \cite{chlv23}. In \cite{chlv23}, the authors investigated the existence of probing frequencies $\omega$  that may yield a vanishing $2\omega$ scattered field, given a second-harmonic generation inhomogeneity of compact support. In other words, frequencies at which the nonlinear effects of the medium can be invisible to an external observer. A necessary condition for the existence of such frequencies is the solvability of a nonlinear eigenvalue problem for a system of two nonlinear PDEs on the support of the nonlinear medium. In the one-dimensional case studied here this necessary condition is also sufficient to guarantee the existence of a nontrivial incident $\omega$-wave which generates no scattered $2\omega$ wave. It should be pointed out that the sufficiency is not true in the higher dimensions, except in special cases, such as a spherically symmetric medium. The two functions associated with the eigenvalue problem represent the total fields at frequency $\omega$ and $2\omega$  linked through non-standard boundary conditions. The corresponding eigenvalues are referred to as generalized second-harmonic transmission eigenvalues, or for short, generalized transmission eigenvalues. In \cite{chlv23} it was conjectured that for a one-dimensional medium a continuum of low-frequency generalized transmission eigenvalues $\omega$ does exist and the corresponding eigenfunctions are unbounded (in an appropriate norm) as $\omega\to 0$.\footnote{We note, however, that taking linear and nonlinear dispersion into account may affect this unbounded behavior of the eigenfunctions (cf. Remark~\ref{REM disp}).} This conjecture was supported by numerical evidence. The goal of the present paper is to provide a rigorous computer-assisted proof of the conjecture. Although not presented here for reasons of brevity, the approach can be extended to the case of a spherically symmetric nonlinear medium in higher dimensions, which leads to the same type of nonlinear eigenvalue problem in the radial variable, with slightly modified boundary conditions. As a result, the existence of a continuum of generalized transmission eigenvalues (not necessarily located in the low-frequency regime) can be established. We note that the generalized transmission eigenvalues for nonlinear media are conceptually related to transmission eigenvalues and non-scattering frequencies in linear scattering theory, where, at such probing frequencies, it is possible to have zero scattering from a given linear inhomogeneity \cite{blas2, blas, CBMS2, nonscattering, nonscattering-A, Hu, nonscattering-SS, HV1, HV2}. It is known that these special frequencies play a role for the unique determination of material properties and for the success of reconstruction algorithms \cite{CBMS2}, as well as for material design. We expect generalized transmission eigenvalues, together with their associated energies, to play a similar role in inverse scattering for nonlinear optics, as well as in nonlinear material design.

\subsection{Preliminaries and the main result}
\setcounter{equation}{0}

Let us consider a nonlinear medium (typically a nonlinear crystal) occupying the interval $D=(0,1)$. Let $E_1$ and $E_2$ denote the $\omega$- and $2\omega$-components of the transmitted electric field inside $D$. In \cite{chlv23}, we asked whether there can exist incident $\omega$-waves for which the nonlinear medium does not scatter the second-harmonic $2\omega$-wave. For such incident waves the nonlinear effects remain localized inside the medium and are invisible to an outside observer. We introduced a nonlinear eigenvalue problem that provides necessary conditions for the existence of such ``second-harmonic nonscattering" frequencies and incident waves:

\begin{equation} \label{TE general}
\begin{cases}
\displaystyle E_1'' + \omega^2 q  E_1  = -\omega^2 \chi_1 E_2 \overline{E}_1 \hspace{0.7in} &\text{in} \ D = (0,1)
\\[.05in]
\displaystyle E_2'' + 4 \omega^2 q E_2   = -\omega^2\chi_2 E_1^2  &\text{in} \ D\\[.05in]
E_2 =  E_2'  = 0 &\text{at} \ \partial D = \{0,1\},
\end{cases}
\end{equation}

\n where $q$ denotes the relative permittivity of the medium, and $\chi_1$ and $\chi_2$ denote the nonlinear susceptibilities of the medium. Specifically, $\chi_1$ describes the interaction between a $2\omega$-wave and an $-\omega$-wave (note the complex conjugation on $E_1$), while $\chi_2$ represents the self-interaction of an $\omega$-wave. In general, these quantities are functions of $x$ (and $\omega$); however, for simplicity, we assume here that they are positive constants. Thus, if there exists a nontrivial incident $\omega$-wave such that the corresponding $2\omega$-wave does not scatter, then \eqref{TE general} admits a nontrivial solution $(E_1,E_2)$. Conversely if \eqref{TE general} admits a nontrivial solution, then the total fields $E_1$ and $E_2$ may be extended to all of $\mathbb{R}$, satisfying $E''_j+\omega^2E_j=0$ outside $D$ and with $E_j$ and $E'_j$ continuous across $\partial D$. Indeed $E_2$ extends to zero outside $D$ and $E_1$ has the extension
$$
E_1=\begin{cases} a^- e^{-i\omega x}+a^+ e^{i \omega x} ~~~\hbox{ for } x<0 \\
b^- e^{-i\omega x}+b^+ e^{i \omega x} ~~~\hbox{ for } x>1 \end{cases}~.
$$
This may be rewritten as
$$
E_1=b^- e^{-i\omega x} +a^+ e^{i \omega x} + 
\begin{cases} (a^- -b^-)e^{-i \omega x} ~~~\hbox{ for } x<0 \\ (b^+-a^+) e^{i \omega x} ~~~~~\hbox{ for } x>1  \end{cases}~.
$$
Here the first part represents the (everywhere defined) incident $\omega$-wave and the second part is the scattered part of the $\omega$-wave, which satisfies the outgoing radiation condition.
In \cite{chlv23}, we showed that a sufficiently small $\omega$ can be an eigenvalue of \eqref{TE general} only if the corresponding eigenfunction blows up at a rate of $1/\omega^{2}$. Namely, if

\begin{equation} \label{e-function blow up}
\|(E_1, E_2)\|_{H^{2}(D)\times L^{2}(D)} \geq \frac{c}{\omega^2},
\end{equation}

\n where $c>0$ depends on the material parameters $q$, $\chi_1$ and $\chi_2$, and $H^2$ denotes the Sobolev space of order 2. Based on numerical evidence, we conjectured that such eigenvalues exist. However, a rigorous treatment of the existence of such (or any other) eigenvalues of \eqref{TE general} remained open. In this work, we use computer-assisted techniques to prove this conjecture and establish the following result:

\begin{theorem} \label{THM main 1}
Assume that $q, \chi_1$ and $\chi_2$ are positive constants. There exist $\delta, c>0$ such that any $\omega \in (0, \delta)$ is an eigenvalue of \eqref{TE general}, with a corresponding eigenfunction $(E_1, E_2)$ satisfying \eqref{e-function blow up}.
\end{theorem}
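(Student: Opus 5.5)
The plan is to remove the blow-up by rescaling, so that the small-$\omega$ problem becomes a regular perturbation of an $\omega=0$ limit problem. Write $E_j=\omega^{-2}u_j$. Then \eqref{TE general} becomes
\begin{equation*}
u_1''+\omega^2 q u_1=-\chi_1 u_2\overline{u}_1,\qquad u_2''+4\omega^2 q u_2=-\chi_2 u_1^2,\qquad u_2=u_2'=0 \ \text{at}\ \{0,1\}.
\end{equation*}
The $\omega$-dependence now enters only through the bounded linear terms. If I produce a family $(u_1,u_2)=(u_1^\omega,u_2^\omega)$ that is nontrivial and bounded above and below in $H^2\times L^2$ uniformly for $\omega\in(0,\delta)$, then $E_j=\omega^{-2}u_j^\omega$ is an eigenfunction satisfying \eqref{e-function blow up} automatically. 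This is consistent with the necessary blow-up rate from \cite{chlv23}.

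\emph{Normalization.} A further rescaling $u_2=v_2/\chi_1$, $u_1=v_1/\sqrt{\chi_1\chi_2}$, together with $\varepsilon=\omega^2 q$, removes $\chi_1,\chi_2$. This leaves a single parameter $\varepsilon\ge 0$.

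\emph{Complex solutions are necessary.} At $\varepsilon=0$ we have $v_2''=-v_1^2$. If $v_1$ were real, $v_2$ would be concave with $v_2(0)=v_2'(0)=0$. It could then not vanish again at $x=1$ unless $v_1\equiv 0$. So the solution I seek must be genuinely complex.

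\emph{Shooting formulation.} I prescribe $v_2(0)=v_2'(0)=0$, $v_1(0)=a$ and $v_1'(0)=b$ with $a,b\in\mathbb{C}$, and impose $G(a,b;\varepsilon)=(v_2(1),v_2'(1))=0$. This is four real equations in four real unknowns. However, the system has the gauge symmetry $(v_1,v_2)\mapsto(e^{i\theta}v_1,e^{2i\theta}v_2)$, so I fix the phase, e.g. $a>0$. To restore a square system I will use a first integral. Multiplying the first equation by $\overline{v}_1'$ and the second by suitable conjugates, and taking real or imaginary parts, should produce an identity showing that one component of $G$ (say a specific combination of $\mathrm{Im}$ parts at $x=1$) vanishes automatically once the others do. If no such identity materializes, the fallback is to add an unfolding parameter to the system and prove a posteriori that it is zero. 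That would give a well-posed $3\times 3$ (or $4\times4$ with unfolding) zero-finding problem $F(p;\varepsilon)=0$.

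\emph{Computer-assisted step.} First I compute numerically an approximate nontrivial zero $\bar p$ of $F(\cdot;0)$, following the numerics of \cite{chlv23}. I then set up a Newton--Kantorovich (radii polynomial) argument for the map $p\mapsto p-A F(p;\varepsilon)$, where $A\approx DF(\bar p;0)^{-1}$, and do this \emph{uniformly} for $\varepsilon\in[0,\varepsilon_0]$. The required bounds are a $Y$-bound on $|AF(\bar p;\varepsilon)|$, a $Z_1$-bound on $|I-A\,DF(\bar p;\varepsilon)|$, and a $Z_2$-bound on the second derivative over a ball. These would be obtained by rigorous enclosure of the ODE flow and its variational equations. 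For that I would use validated Taylor integration or a Chebyshev series representation of $v_1,v_2$ on $[0,1]$ with interval arithmetic, treating $\varepsilon$ itself as an interval. A successful radii polynomial inequality yields, for every $\varepsilon\in[0,\varepsilon_0]$, a unique zero $p(\varepsilon)$ in a small ball around $\bar p$ that excludes $p=0$. Hence the corresponding $(v_1,v_2)$ is nontrivial with norms bounded above and below uniformly. Setting $\delta=\sqrt{\varepsilon_0/q}$ and undoing the scalings then gives Theorem~\ref{THM main 1}.

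\emph{Main obstacle.} I expect the hardest step to be obtaining a genuinely nondegenerate, square formulation at $\varepsilon=0$, that is, correctly accounting for the gauge symmetry and the redundant boundary condition so that $DF(\bar p;0)$ is invertible. The second difficulty is making the enclosure of the nonlinear flow tight enough, uniformly in the $\varepsilon$-interval, that $Z_1<1$. If a single interval $[0,\varepsilon_0]$ is too wide for that, I would subdivide it into subintervals, verifying each one separately.
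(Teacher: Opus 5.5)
Your rescaling $E_j=\omega^{-2}u_j$, the normalization, and the idea of a radii-polynomial argument that is uniform in the parameter all match the paper (its Theorem~\ref{THM NK3}, applied over $\omega\in[0,2.03]$). The gap is exactly at the step you flag as the main obstacle, and the identity you hope for does not exist.

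After fixing $a>0$, your shooting map $(a,b)\mapsto (v_2(1),v_2'(1))$ has three real unknowns and four real equations. A Newton--Kantorovich argument only certifies zeros that persist under small perturbations of a \emph{square} map whose derivative is approximately invertible. Zeros of an overdetermined map are not of this kind, so you need a structural reason to discard one equation. The first integrals do not supply one. The gauge symmetry gives the Noether charge $J=\Im(\overline{v}_1 v_1'+\overline{v}_2 v_2')$, and translation invariance gives $H=|v_1'|^2+\varepsilon|v_1|^2+\tfrac12|v_2'|^2+2\varepsilon|v_2|^2+\Re(\overline{v}_2v_1^2)$. With $v_2=v_2'=0$ at both ends these only yield $\Im(\overline{v}_1v_1')(1)=\Im(\overline{a}b)$ and $|v_1'(1)|^2+\varepsilon|v_1(1)|^2=|b|^2+\varepsilon|a|^2$. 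These are relations among the unconstrained quantities $v_1(1),v_1'(1)$, not among the components of $G$.

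The unfolding fallback fails for the same reason. The standard way to show an unfolding parameter $\lambda$ vanishes is an identity such as $J(1)-J(0)=\lambda\int\ldots$, combined with boundary conditions that fix $J$ at both ends. Here nothing fixes $J$ at either endpoint, because there are no conditions on $v_1$. So nothing forces $\lambda=0$.

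The missing idea is the reversibility of \eqref{TE u reduced}: if $(u_1,u_2)$ is a solution, so is $x\mapsto(\overline{u_1(1-x)},\overline{u_2(1-x)})$. The paper restricts to solutions fixed by this involution, namely \eqref{sym}: $\Re u_j$ even and $\Im u_j$ odd about $x=\tfrac12$. This does two things at once.

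First, the gauge orbit $(e^{i\theta}u_1,e^{2i\theta}u_2)$ meets the symmetric class only for $\theta\in\{0,\pi\}$, so solutions become locally unique. Second, the problem reduces to $(\tfrac12,1)$ with $\Re u_j'(\tfrac12)=\Im u_j(\tfrac12)=0$ and $u_2(1)=u_2'(1)=0$. That is eight real conditions for the eight-dimensional real first-order system. In shooting language, you have four real unknowns $u_j(\tfrac12)\in\RR$ and $u_j'(\tfrac12)\in i\RR$, four real equations at $x=1$, and the conditions at $x=0$ follow by reflection.

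This square problem is nondegenerate; the paper verifies $Z_1<1$. The paper then runs Newton--Kantorovich on Chebyshev coefficients in $(\ell^1_\nu)^8$. Its approximate inverse is a numerically inverted finite block plus the diagonal tail $a_n/(2n)$, and it Chebyshev-interpolates $\bm{p}(\omega)$ and $\bm{A}^N(\omega)$ to get bounds uniform in $\omega\in[0,2.03]$. If you shoot from $x=\tfrac12$ on this symmetric subspace instead of from $x=0$, the rest of your plan becomes a legitimate alternative to the paper's global Chebyshev formulation. That includes validated integration with $\varepsilon$ treated as an interval, subdivision if needed, and the lower bound on the norm away from $0$.
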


\n In view of \eqref{e-function blow up}, let us rescale the eigenfunctions and set $u_j=\omega^{2} E_j$ for $j=1,2$. We then obtain the problem

\begin{equation} \label{TE u}
\begin{cases}
\displaystyle u_1'' + \omega^2 q  u_1  = - \chi_1 u_2 \overline{u}_1 \hspace{0.7in} &\text{in} \ (0,1)
\\[.05in]
\displaystyle u_2'' + 4 \omega^2 q u_2   = -\chi_2 u_1^2  &\text{in} \ (0,1)\\[.05in]
u_2 =  u_2'  = 0 &\text{at} \ \{0,1\}.
\end{cases}
\end{equation}

\n The desired result will follow once we show that \eqref{TE u} admits a nontrivial solution for any $\omega\in(0,\delta)$ such that $(u_1,u_2)$ remains bounded away from zero as $\omega\to0$. Let us make further simplifications. Since $\chi_1$ and $\chi_2$ are constants, we may rescale the eigenfunctions and work instead with $\sqrt{\chi_1\chi_2} \, u_1$ and $\chi_1 u_2$. These functions solve \eqref{TE u}, but with $\chi_1=\chi_2=1$. Moreover, instead of $\omega$, let us work with the eigenvalue parameter $\sqrt{q} \omega$. Thus, we may equivalently study the following reduced problem:

\begin{equation} \label{TE u reduced}
\begin{cases}
\displaystyle u_1'' + \omega^2  u_1  = - u_2 \overline{u}_1 \hspace{0.7in} &\text{in} \ (0,1)
\\[.05in]
\displaystyle u_2'' + 4 \omega^2 u_2   = - u_1^2  &\text{in} \ (0,1)\\[.05in]
u_2 =  u_2'  = 0 &\text{at} \ \{0,1\}.
\end{cases}
\end{equation}

\n Regarding this reduced problem, we obtain the following result, which implies Theorem~\ref{THM main 1}:

\begin{theorem} \label{THM main 2}
For any $\omega \in [0, 2.03]$ the problem \eqref{TE u reduced} admits a nontrivial solution $\bm{u} = (u_1, u_2)$ that depends smoothly (in fact, $C^\infty$) on $\omega$. The norm $\|\bm{u}\|_{H^{2}(D)\times L^{2}(D)}$ stays uniformly bounded away from zero, and $\bm{u}$ has the following symmetries: $\Re u_j$ is even and $\Im u_j$ is odd with respect to the midpoint $x = \frac{1}{2}$ for $j=1,2$.
\end{theorem}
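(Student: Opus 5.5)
We will establish Theorem~\ref{THM main 2} by a computer-assisted continuation argument carried out in a symmetric Fourier (or Chebyshev) sequence space. The problem has more unknowns than equations, so the plan starts by removing this mismatch.

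\textbf{Formulation as a square zero-finding problem.} A second-order system for $(u_1,u_2)$ naturally carries four boundary conditions, but \eqref{TE u reduced} imposes four conditions on $u_2$ and none on $u_1$. We therefore regard the two free data of $u_1$, namely $u_1(0)$ and $u_1'(0)$, as unknowns, and pin them down with normalization and phase conditions.

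\begin{itemize}
\item The symmetry ansatz ($\Re u_j$ even, $\Im u_j$ odd about $x=\tfrac12$) is preserved by the equations, since $x\mapsto 1-x$ combined with complex conjugation maps solutions to solutions.
\item The phase invariance $u_1\mapsto e^{i\theta}u_1$, $u_2\mapsto e^{-2i\theta}u_2$ must be broken, for instance by imposing $\Im u_1(\tfrac12)=0$. This condition is automatic under the symmetry, so instead we fix a real scalar such as $\Im u_1'(\tfrac12)$ or $\Re u_1(\tfrac12)$.
\item The scaling is not free, because the nonlinearity is quadratic. Nontriviality is therefore enforced by solving directly, without a normalization, near a nontrivial approximate solution.
\end{itemize}

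Concretely, we write $u_j(x)=\sum_k a^{(j)}_k \cos(2\pi k(x-\tfrac12)) + i\sum_k b^{(j)}_k \sin(2\pi k (x-\tfrac12))$, or an equivalent Chebyshev expansion on $[0,1]$. We then integrate the ODEs to obtain a fixed-point form $F(\bm{u},\omega)=0$ on a weighted $\ell^1$ space with geometric decay weight $\nu>1$. The boundary conditions $u_2=u_2'=0$ at $x=0$ (those at $x=1$ follow by symmetry) are adjoined as scalar equations, balanced by scalar unknowns such as the integration constants. The count must be checked carefully so that $DF$ is Fredholm of index zero after the phase is fixed.

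\textbf{Validated continuation in $\omega$.} We compute numerical approximations $\bar{\bm{u}}(\omega_i)$ on a grid $0=\omega_0<\dots<\omega_N=2.03$. At $\omega=0$ the problem is autonomous and polynomial, which gives a good starting point, and we continue the branch by pseudo-arclength or simply in $\omega$. On each interval $[\omega_i,\omega_{i+1}]$ we use the linear interpolant of the approximations and apply a parameterized Newton--Kantorovich (radii polynomial) theorem. The operator is $T(\bm{u},\omega)=\bm{u}-A F(\bm{u},\omega)$, with $A$ a numerical inverse of $DF$ at finite truncation extended by the inverse of the diagonal derivative tail. We bound the following quantities, uniformly in $\omega$ in the interval, using interval arithmetic:
\begin{itemize}
\item $Y\ge\|AF(\bar{\bm u},\omega)\|$;
\item $Z_1\ge\|I-ADF(\bar{\bm u},\omega)\|$;
\item $Z_2$, which is trivial because $F$ is quadratic.
\end{itemize}
If $Z_2r^2-(1-Z_1)r+Y<0$ for some $r>0$, we obtain a unique zero in the ball $B_r(\bar{\bm u}(\omega))$. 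Because the zeros are unique and the balls overlap between consecutive intervals, the local solutions glue into a single branch. By the uniform contraction principle and the analyticity of $F$ in $\omega$, this branch is $C^\infty$ (indeed analytic) in $\omega$. Geometric decay of the coefficients gives analyticity in $x$, so $\bm u$ is a classical solution that inherits the symmetries of the ansatz.

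\textbf{Nontriviality and the main obstacle.} We verify that $\|\bar{\bm u}(\omega)\|-r$, measured in the $H^2\times L^2$ norm and controlled by the weighted $\ell^1$ norm, is bounded below by a positive constant on every interval. The main obstacle is the $Z_1$ bound near $\omega=0$ and along the whole range, because $DF$ may become nearly singular. The trivial solution and possible bifurcations or folds along the branch are the danger. Tracking $\Re u_1(\tfrac12)$ as an additional continuation unknown, if needed, should keep $DF$ invertible; failing that, we would refine the grid or increase the truncation size until the bounds close.
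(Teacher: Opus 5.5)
There is a genuine gap in how you set up the zero-finding problem, and it sits at the point the paper identifies as the crux.

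\textbf{The formulation.} The count in \eqref{TE u reduced} is already square: four complex conditions $u_2=u_2'=0$ at $x=0,1$ for a fourth-order complex system. What goes wrong is the continuous symmetry $(u_1,u_2)\mapsto(e^{i\theta}u_1,e^{2i\theta}u_2)$ (note $e^{2i\theta}$, not $e^{-2i\theta}$). It puts $(iu_1,2iu_2)$ in the kernel of the linearization at every nontrivial solution. The ansatz \eqref{sym} by itself removes this: $e^{i\theta}u_1$ satisfies \eqref{sym} only if $e^{2i\theta}=1$, so only $u_1\mapsto -u_1$ survives. That the natural phase condition $\Im u_1(\tfrac12)=0$ is automatic under \eqref{sym} is a symptom of this, not a reason to impose another condition. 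Under \eqref{sym} one gets \eqref{TE u half interval}. Its free data are the four reals $\Re u_j(\tfrac12)$, $\Im u_j'(\tfrac12)$, $j=1,2$, matched by the four real equations $u_2(1)=u_2'(1)=0$.

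If you also prescribe $\Re u_1(\tfrac12)$ or $\Im u_1'(\tfrac12)$, you add a ninth scalar equation with no new unknown. As you note yourself, these values are not free, since there is no scaling invariance. So $DF$ becomes Fredholm of index $-1$. No injective $A$ can then give $Z_1<1$, because by the remark after Theorem~\ref{THM NK2} that would force $DF$ to be invertible. The radii-polynomial step therefore cannot close. The fix is to drop the extra phase condition entirely, which gives exactly the paper's formulation. Your closing suggestion to treat $\Re u_1(\tfrac12)$ as an unknown again amounts to this, but it contradicts the earlier step.

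\textbf{The basis.} The Fourier ansatz in $\cos(2\pi k(x-\tfrac12))$, $\sin(2\pi k(x-\tfrac12))$ is not equivalent to a Chebyshev expansion. Termwise it forces $\Im u_j$ and $\Re u_j'$ to vanish at $x=0,1$, which fails for the true $u_1$. Since $u_1$ is not $1$-periodic, its coefficients decay only algebraically, and it lies in no $\ell^1_\nu$ with $\nu>1$. The Chebyshev option is the one that works; the paper maps $(\tfrac12,1)$ onto $(-1,1)$ and expands the first-order system in Chebyshev series there.

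\textbf{What matches the paper.} With these corrections your plan coincides with the paper's:
\begin{itemize}
\item an approximate inverse $A=A^N+A_N$ whose tail inverts the diagonal operator $M$;
\item a constant $Z_2$ from the quadratic nonlinearity;
\item uniform-in-$\omega$ bounds via a parameter-dependent Newton--Kantorovich theorem;
\item nontriviality from $\|\bar{\bm u}(\omega)\|>r$.
\end{itemize}
The one structural difference is in how $\omega$ is handled. The paper uses a single Chebyshev interpolant in $\omega$ of degree $50$ on all of $[0,2.03]$ with one set of uniform bounds, so no gluing of local branches is needed. Your piecewise-linear subintervals with overlapping uniqueness balls are a standard alternative. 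In either case, the endpoint $2.03$ is justified only by actually carrying out the interval-arithmetic verification, which a proposal cannot replace.
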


\begin{remark} \label{REM disp}
\normalfont

More generally, if $q, \chi_1$ and $\chi_2$ are positive functions of $\omega$ (but are independent of $x$), the same rescaling argument leading to the reduced problem remains valid. Indeed, for any $\omega \sqrt{q(\omega)} \in [0, 2.03]$

\begin{equation*}
\|(E_1, E_2)\|_{H^{2}(D)\times L^{2}(D)} \geq \frac{c}{\omega^2 \max \left\{\chi_1(\omega), \sqrt{\chi_1(\omega) \chi_2(\omega)} \right\}},
\end{equation*}

\n where $c>0$ is some absolute constant independent of all the parameters involved.

\end{remark}

\subsection{Main ideas and  outline of the paper}

\n In \cite{chlv23}, we observed that for sufficiently small $\omega$, the problem \eqref{TE u reduced} (or equivalently \eqref{TE general}) cannot admit a nontrivial real-valued solution. Consequently, both $u_1$ and $u_2$ must be complex-valued functions. This can be seen easily by first setting $\omega = 0$ in \eqref{TE u reduced}, then integrating the second equation and using the boundary conditions to obtain $\int_0^1 u_1^2 dx = 0$, which implies that any nonzero $u_1$ cannot be real-valued. A perturbation argument then yields the result for sufficiently small $\omega$.

The main difficulty in studying the eigenvalue problem \eqref{TE u reduced} is that it possesses a continuous symmetry:

\begin{equation} \label{CS}
\text{if} \ (u_1,u_2) \ \text{solves} \ \eqref{TE u reduced}, \ \text{then so does} \ (e^{i\theta} u_1, e^{i2\theta} u_2) \ \text{for any} \ \theta \in [0,2\pi].
\end{equation}

\n As a result, nontrivial solutions of \eqref{TE u reduced} cannot be isolated, and there is no local uniqueness: in any neighborhood of a nontrivial solution, there exists another one. Consequently, contraction-type arguments cannot be applied. The analysis is made possible by imposing additional symmetries on the eigenfunctions, which break the continuous symmetry and restore local uniqueness. Specifically, we impose

\begin{equation} \label{sym}
u_j\left( \tfrac{1}{2} + x \right) = \overline{u_j \left( \tfrac{1}{2} - x \right)} \qquad \qquad j=1,2 \ \ \text{and} \ \ x \in \left(-\tfrac{1}{2},  \tfrac{1}{2} \right).
\tag{SYM}
\end{equation}

\n In other words, we require the real parts of $u_j$ to be even and the imaginary parts to be odd with respect to the midpoint $\tfrac{1}{2}$. Our motivation for imposing \eqref{sym} comes from preliminary numerical analysis, which showed evidence for the existence of such symmetric solutions. The nontrivial solutions of \eqref{TE u reduced} that lie in the subset defined by the symmetry relation \eqref{sym} do not satisfy \eqref{CS}, and we are thus in a position to use a contraction-type argument combined with computer-assisted methods to establish the existence of nontrivial solutions of \eqref{TE u reduced}.

The idea can be described as follows. The system \eqref{TE u reduced}, together with \eqref{sym}, can be written as an abstract operator equation $F(\bm{u}) = 0$ in an appropriate Banach space, where $\bm{u}=(u_1,u_2)$. We first find an approximate solution $\bm{p}$ of this equation numerically. We then study the operator equation near $\bm{p}$ by writing $\bm{u} = \bm{p} + \bm{h}$, where $\bm{h}$ is small. Linearizing the operator $F$ near $\bm{p}$, we show that this linearization is invertible (as a consequence of the imposed symmetries). As a result, we can rewrite our operator equation as a fixed-point equation $\bm{h} = T(\bm{h})$, where $T$ is a nonlinear operator that we then show is a contraction on a small ball around the origin. This implies the existence of a solution $\bm{h}$, and consequently of a true solution $\bm{u}$ near the approximate solution $\bm{p}$. Moreover, $T$ is a contraction on a ball whose radius is small enough to ensure that the fixed point $\bm{h} \neq -\bm{p}$. In other words, $\bm{u}$ is sufficiently close to $\bm{p}$, guaranteeing that the true solution $\bm{u}$ we obtain is nontrivial. This is the celebrated Newton-Kantorovich approach, which has been widely used in the literature \cite{Arioli2005, polynomial_chaos, Day2007, Plum1992, Yamamoto2021, Ort68}.

We now give an outline of the paper.

\vspace{.05in}

$\bullet$ We rewrite \eqref{TE u reduced} as a first-order real system, in which case the unknown function becomes an 8-dimensional vector, denoted by $\bm{x} = (x_1,...,x_8)$. Indeed, we need to include $u_1$ and $u_1'$ as well as $u_2$ and $u_2'$, but each of these functions is complex-valued and corresponds to two entries in the vector $\bm{x}$. Furthermore, after an appropriate change of variables, we expand each component function $x_j$ into a Chebyshev series, so that each $x_j$ now corresponds to an infinite sequence of its Chebyshev coefficients. Thus, we are effectively working with an 8-tuple of sequences of real numbers. This is done in Sections~\ref{SECT 1st order system} and \ref{SECT Cheb exp}.

\vspace{.05in}

$\bullet$ In Section~\ref{SECT NK}, we describe the classical Newton-Kantorovich theorem and its variation needed to establish our result. We first consider the limiting case $\omega=0$ in \eqref{TE u reduced} and prove the existence of a nontrivial solution satisfying \eqref{sym}. This is done in Section~\ref{SECT omega=0}, and in the preceding subsections of Section~\ref{SECT proof of main}, we present the necessary preliminaries for applying the appropriate Newton-Kantorovich theorem. In particular, the approximate solution $\bm{p}$ is computed numerically using the shooting method (cf. Figure~\ref{FIG}). The classical Newton-Kantorovich theorem requires the invertibility of $F'(\bm{p})$. Here we use a slightly modified version (cf. Theorem~\ref{THM NK2}), which allows working with an approximate inverse. Invertibility of $F'(\bm{p})$ follows as a consequence but is not needed directly. 

As a result, when dealing with norm estimates, the computations are separated into finite parts (which can be represented as vectors and matrices) and ``tails", which contain the infinite parts corresponding to the infinite tails of the Chebyshev sequences. We use analytic estimates to treat the tail parts, which are then combined with numerical estimates of the finite parts. This allows us to rigorously estimate both the norms of elements in a Banach space and the norms of operators. The numerical computations are performed on a computer using interval arithmetic \cite{Moore66, Tucker2011}, which controls rounding errors and provides rigorous verification of inequalities; for example, that an appropriate operator is a contraction and therefore has a fixed point, which then implies the existence of our desired nontrivial solution. 

\vspace{.05in}

$\bullet$ In Section~\ref{SECT omega>0}, we treat the case $\omega>0$ using a parameter continuation approach, which is again based on a version of the Newton-Kantorovich theorem requiring uniform estimates with respect to $\omega$ (cf. Theorem~\ref{THM NK3}). We obtain that, for any $\omega \in [0,2.03]$, the problem \eqref{TE u reduced} has a nontrivial solution satisfying \eqref{sym}. We are unable to continue this solution branch beyond $\omega = 2.03$ using parameter continuation, as numerical evidence indicates a bifurcation occurring between $\omega = 2.04$ and $\omega = 2.06$ (one branch consists of the conjugate solution and returns toward $\omega=0$, while the other branch is purely real-valued). Furthermore, since multiple branches exist, pseudo-arclength continuation also fails numerically. A rigorous bifurcation analysis will require additional machinery, perhaps similar to \cite{jbjpelena_hopf}, and is left for future work.

\begin{figure}[H]
\begin{center}
\includegraphics[scale=0.43]{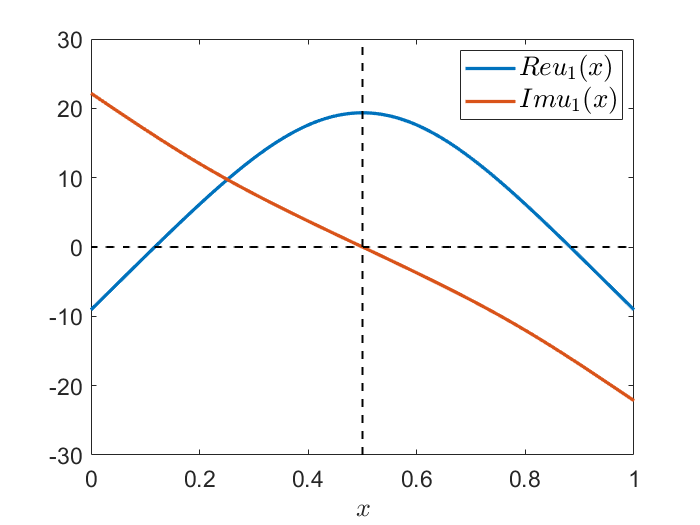}
\includegraphics[scale=0.43]{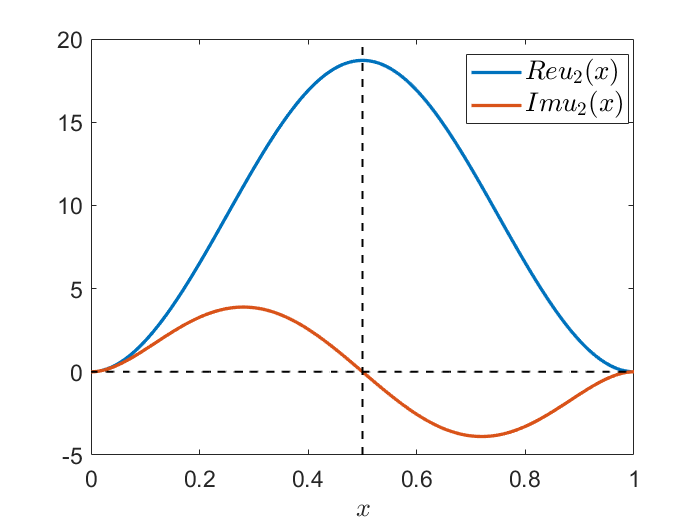}
\caption{Approximate solution of \eqref{TE u reduced} for $\omega=0$ that satisfies the symmetry \eqref{sym}.}
\label{FIG}
\end{center}
\end{figure}

We note that the approach we adopt has been widely used in the area of computer-assisted proofs \cite{Arioli2005, Ayala2026, polynomial_chaos, Day2007, Plum1992, Yamamoto2021}. We refer to \cite{G3} for a brief survey of other computer-assisted proofs for PDE problems and more particularly to \cite{G1,G2} for application to eigenvalue problems.  Finally, we want to draw attention to \cite{LiYang} in which an essential hypothesis about embedded eigenvalues for the three-dimensional cubic NLS is verified using computer-assisted techniques, as well as to the very recent work \cite{CS} in which computer-assisted techniques  have been used to construct a counterexample to the Pompeiu--Schiffer conjecture in the planar case.

\section{The Newton-Kantorovich theorem} \label{SECT NK}
\setcounter{equation}{0}

In this section, we describe in an abstract framework the functional-analytic tools that will be used to obtain the desired existence result. We begin with a brief summary of the classical Newton-Kantorovich theorem \cite{Ort68, Zeidler} and introduce a slight modification, also referred to as the radii polynomial theorem \cite{Lessard2007, Yamamoto1998}.

Suppose that $X$ and $Y$ are Banach spaces, and let $F: X \to Y$ be a nonlinear operator defined on the open ball $B_{r_0}(p)$ of radius $r_0>0$ centered at $p \in X$. Assume that $F$ is Fr{\'e}chet differentiable in this ball and that its Fr{\'e}chet derivative is Lipschitz continuous with constant $c>0$, i.e., for all $x, \tilde x\in B_{r_0}(p)$, 

\begin{equation*}
\|F'(x) - F'(\tilde x)\|_{B(X,Y)} \leq c \|x - \tilde x\|_X,
\end{equation*}

\n where the subscript $B(X,Y)$ denotes the operator norm. Assume also that the derivative $L = F'(p): X \to Y$ is an invertible linear operator (with continuous inverse). Our goal is to find a zero of $F$ near $p$. To this end, we expand

\begin{equation} \label{F expand}
F(p + h) = F(p) + L h + E(h), \qquad \qquad E(h) = \int_0^1 \left[F'(p+th) - F'(p) \right] h \, dt,
\end{equation}

\n where $E$ denotes the remainder term and we used its mean-value representation \cite{Zeidler}. Consequently, the zero-finding problem can be written as a fixed-point equation

\begin{equation*}
h = T(h), \qquad \qquad T(h) = - L^{-1} F(p) - L^{-1}E(h).
\end{equation*}

\n The goal now is to derive conditions that guarantee that $T$ maps the closed ball $\overline{B_r}(0)$ into itself and that it is a contraction on this ball (here $0<r<r_0$). The contraction mapping theorem then implies the existence of a unique fixed point $h$ in this ball, which in turn yields a solution to $F(x) = 0$ with $x = p + h$. Suppose $a$ and $b$ are positive constants such that

\begin{equation*}
\|L^{-1}\|_{B(Y,X)} \leq a, \qquad \qquad \|L^{-1} F(p)\|_X \leq b.
\end{equation*}

\n The following estimates follow directly from the integral representation of the remainder term: for all $h, h_1, h_2 \in B_{r}(0)$ 

\begin{equation*}
\|E(h)\|_Y \leq \frac{c}{2} \|h\|_X^2, \qquad \qquad \|E(h_1) - E(h_2)\|_Y \leq cr \|h_2 - h_1\|_X.
\end{equation*}

\n Consequently,   

\begin{equation*}
\|T(h)\|_X \leq \frac{ac}{2} r^2 + b, \qquad \qquad
\|T(h_1) - T(h_2)\|_X \leq acr \|h_2 - h_1\|_X.
\end{equation*}

\n We want the right-hand side of the first estimate above to be bounded by $r$. Therefore, let us introduce the quadratic

\begin{equation*}
Q(r) = \frac{ac}{2} r^2 - r + b.
\end{equation*}

\n We now state:

\begin{theorem}[Newton-Kantorovich] \label{THM NK}
Under the foregoing assumptions, suppose there exists $r \in (0,r_0)$ such that $Q(r) < 0$. Then $T$ has a unique fixed point in $B_r(0)$, or equivalently, $F$ has a unique zero in $B_r(p)$.
\end{theorem}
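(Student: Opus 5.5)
The plan is to apply the Banach contraction mapping theorem to $T$ on the closed ball $\overline{B_r}(0)$, using the estimates already derived above. First I fix $r\in(0,r_0)$ with $Q(r)<0$, that is $\frac{ac}{2}r^2+b<r$. For $h\in\overline{B_r}(0)$ the bound $\|E(h)\|_Y\le \frac c2\|h\|_X^2$ gives
$$\|T(h)\|_X\le \|L^{-1}F(p)\|_X+\|L^{-1}\|_{B(Y,X)}\,\|E(h)\|_Y\le b+\tfrac{ac}{2}r^2<r.$$
So $T$ maps $\overline{B_r}(0)$ into the open ball $B_r(0)$, and in particular into itself. A small point needs care here: the remainder estimates were stated for $h\in B_r(0)$. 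They extend to the closed ball, because $r<r_0$ means $p+th$ stays in $B_{r_0}(p)$ for $t\in[0,1]$. Hence the Lipschitz bound on $F'$ applies along the whole segment, and the integral representation \eqref{F expand} gives $\|E(h)\|_Y\le\int_0^1 ct\|h\|^2dt$.

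Next comes the contraction property. For $h_1,h_2\in\overline{B_r}(0)$ I write
$$E(h_1)-E(h_2)=F(p+h_1)-F(p+h_2)-L(h_1-h_2)=\int_0^1\big[F'(p+h_2+t(h_1-h_2))-F'(p)\big](h_1-h_2)\,dt.$$
The points $h_2+t(h_1-h_2)$ lie in the convex ball of radius $r$. So the integrand is bounded by $cr\|h_1-h_2\|$, which gives $\|T(h_1)-T(h_2)\|_X\le acr\|h_1-h_2\|_X$. It then remains to show $acr<1$, and this is the only genuinely non-mechanical step. I would deduce it from $Q(r)<0$ together with $b>0$. Indeed, $\frac{ac}{2}r^2<r-b<r$ gives $acr<2$, which is not enough on its own. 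I refine it using the geometry of the quadratic. $Q$ has positive leading coefficient, $Q(0)=b>0$, and $Q(r)<0$, so $r$ lies strictly between the two real roots. Those roots are $r_\pm=(1\pm\sqrt{1-2abc})/(ac)$, so
$$acr<ac\,r_+=1+\sqrt{1-2abc}.$$
This still allows $acr\ge1$ when $r$ exceeds the vertex $1/(ac)$. So I would instead use the smallest admissible radius. Pick $r'\in(r_-,\min(r,1/(ac))]$; the interval is nonempty because $r_-<1/(ac)$ and $r_-<r$. Then $Q(r')<0$ and $acr'\le1$, and taking $r'<1/(ac)$ strictly gives a genuine contraction on $\overline{B_{r'}}(0)$. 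By the Banach fixed point theorem $T$ has a fixed point $h^*$ in $\overline{B_{r'}}(0)\subset B_r(0)$.

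The remaining task is uniqueness in all of $B_r(0)$, not just in the smaller ball. Suppose $\tilde h\in B_r(0)$ is another fixed point. I would compare it with $h^*$ along the segment joining them, using the refined Lipschitz estimate
$$\|E(\tilde h)-E(h^*)\|\le \tfrac c2\big(\|\tilde h\|+\|h^*\|\big)\|\tilde h-h^*\|,$$
obtained by integrating $\|F'(p+h^*+t(\tilde h-h^*))-F'(p)\|$ in $t$. This gives $\|\tilde h-h^*\|\le \frac{ac}{2}(\|\tilde h\|+r')\|\tilde h-h^*\|$. Since $\|\tilde h\|<r<r_+$ and $r'$ can be taken arbitrarily close to $r_-$, the factor is below $\frac{ac}{2}(r_++r_-)=1$, so $\tilde h=h^*$. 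This uniqueness step is where I expect the main difficulty. The stated contraction constant $acr$ alone does not yield uniqueness on the full $B_r(0)$; one needs the sharper averaged Lipschitz bound together with the root identity $r_++r_-=2/(ac)$. Finally, $h\mapsto p+h$ is a bijection between fixed points of $T$ in $B_r(0)$ and zeros of $F$ in $B_r(p)$, since $L$ is invertible and $F(p+h)=L(h-T(h))$. Hence $F$ has a unique zero in $B_r(p)$.
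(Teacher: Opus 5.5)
Your proof is correct and follows the paper's route. Both use a Banach contraction argument on a ball whose radius is the smaller root $r_-$ (the paper) or just above it (you). Your averaged Lipschitz bound together with $r_-+r_+=2/(ac)$ then supplies the extension of uniqueness to $B_r(0)$, which the paper only cites from Zeidler. You could simply take $r'=r_-$: since $Q(r_-)=0$ already gives $T(\overline{B_{r_-}}(0))\subset\overline{B_{r_-}}(0)$ and $acr_-=1-\sqrt{1-2abc}<1$, this makes $\|h^*\|\le r_-$ immediate, rather than implicitly relying on the fixed point being the same for every choice of $r'$.
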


\begin{proof}
Since the quadratic $Q$ takes a negative value at $r\in(0,r_0)$, and since $Q(0)=b>0$, it must have two real positive roots. Consider the smaller root

\begin{equation*}
r_{-} = \frac{1 - \sqrt{1-2abc}}{ac}.
\end{equation*}

\n Clearly, $r_-<r$ and $T$ maps the ball $\overline{B}_{r_{-}}(0)$ into itself. Further, as $ac r_{-} < 1$, $T$ is a contraction in this ball. Thus, there exists a unique fixed point in this ball. In fact, the uniqueness extends to the larger ball $B_r(0)$. For details, we refer to \cite{Zeidler}.
\end{proof}

In practice, directly verifying the invertibility of $F'(p)$ may be inconvenient. To address this, we now formulate a generalization of the above theorem, which uses an operator $A$ that serves as an approximate inverse of $F'(p)$.

\begin{theorem} \label{THM NK2}
Let $X$ and $Y$ be Banach spaces, $p \in X$ and $r_0 > 0$. Let $F : X \to Y$ be Fr{\'e}chet differentiable in the ball $B_{r_0}(p)$. Assume that $A : Y \to X$ is an injective, bounded linear operator. Let $Y_0, Z_1$ and $Z_2$ be nonnegative constants such that 

\begin{enumerate}
\item[(i)] $\displaystyle \|A F (p)\|_X \leq Y_0$
\item[(ii)] $\displaystyle \|I - A F'(p) \|_{B(X)} \leq Z_1$
\item[(iii)] $\displaystyle \|A \left[ F'(x) - F'(\tilde x) \right]\|_{B(X)} \leq Z_2 \|x-\tilde x\|_X$ for all $x,\tilde x \in B_{r_0}(p)$
\end{enumerate}

\n Consider the quadratic

\begin{equation*}
Q(r) = \frac{1}{2} Z_2 r^2 - (1-Z_1) r + Y_0.
\end{equation*}

\n If there exists $r \in (0, r_0)$ such that $Q(r)<0$, then there exists a unique $x \in B_r(p)$ satisfying $F(x) = 0$. 

\end{theorem}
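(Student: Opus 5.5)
We will reduce the statement to a contraction argument for the Newton-like map
\[
T(x) = x - A F(x), \qquad x \in X .
\]
Since $A$ is injective, fixed points of $T$ are exactly the zeros of $F$: indeed $T(x)=x$ iff $AF(x)=0$ iff $F(x)=0$. So it suffices to show that $T$ maps a closed ball $\overline{B_{\rho}}(p)$ into itself, is a contraction there, and to handle uniqueness on the open ball $B_r(p)$.

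\textbf{Lipschitz bound for $T'$.} First, $T$ is Fréchet differentiable on $B_{r_0}(p)$ with $T'(x) = I - AF'(x)$. For $x \in B_{r_0}(p)$ with $\|x-p\|_X\le \rho$, write $T'(x) = [I - AF'(p)] - A[F'(x)-F'(p)]$. Conditions (ii) and (iii) then give
\[
\|T'(x)\|_{B(X)} \le Z_1 + Z_2\|x-p\|_X \le Z_1 + Z_2 \rho .
\]

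\textbf{Self-map of $\overline{B_\rho}(p)$.} By the mean-value inequality along the segment from $p$ to $x$,
\[
\|T(x)-p\|_X \le \|T(p)-p\|_X + \|T(x)-T(p)\|_X \le Y_0 + \int_0^1 \bigl(Z_1 + Z_2 t\|x-p\|_X\bigr)\|x-p\|_X\,dt ,
\]
where $\|T(p)-p\|_X = \|AF(p)\|_X \le Y_0$ by (i). The integral is at most $Z_1\rho + \tfrac12 Z_2\rho^2$, so the right-hand side is bounded by $Y_0 + Z_1\rho + \tfrac12 Z_2\rho^2 = Q(\rho)+\rho$. Hence $Q(\rho)\le 0$ implies $T(\overline{B_\rho}(p)) \subset \overline{B_\rho}(p)$.

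\textbf{Contraction.} The same mean-value argument on this convex ball shows $T$ is Lipschitz there with constant $Z_1+Z_2\rho$. We need $Z_1 + Z_2\rho < 1$.

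\textbf{Choice of radius.} Take $\rho = r$, the given radius with $Q(r)<0$. Then $\tfrac12 Z_2 r^2 + Y_0 < (1-Z_1) r$, which forces $Z_1 < 1$. The contraction condition $Z_1+Z_2 r<1$ does not follow automatically from $Q(r)<0$, so here we use the smaller root instead. Since $Q(0)=Y_0\ge 0$ and $Q(r)<0$, the quadratic $Q$ has a smallest root $r_-\in[0,r)$. Set $\rho = r_-$ if $Y_0>0$ (and $\rho=0$ trivially if $Y_0=0$, where $p$ is itself a zero since $A$ is injective).

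\begin{itemize}
\item If $Z_2>0$, then $Q'(r_-) = Z_2 r_- - (1-Z_1) < 0$, because $r_-$ lies below the vertex $(1-Z_1)/Z_2$. This gives exactly $Z_1 + Z_2 r_- < 1$.
\item If $Z_2=0$, then $Q$ is linear and $Z_1<1$ suffices directly.
\end{itemize}
In both cases $T$ is a contraction on $\overline{B_{r_-}}(p)\subset B_r(p)\subset B_{r_0}(p)$. Banach's fixed-point theorem then gives a zero $x$ of $F$ in $\overline{B_{r_-}}(p)$.

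\textbf{Uniqueness in $B_r(p)$ (main obstacle).} The contraction constant on all of $B_r(p)$ may exceed $1$, so uniqueness there is the delicate step. Suppose $y\in B_r(p)$ is another zero and set $s=\max(\|y-p\|_X,\,r_-)<r$. We integrate $T'$ along the segment from $x$ to $y$ and use convexity of $Q$ on $[r_-,s]$. Concretely, let $\phi(t)=\|p + t(y-p) - p\|_X=t\|y-p\|_X$. The refined estimate
\[
\|y-x\|_X = \|T(y)-T(x)\|_X \le \int_0^1 \bigl(Z_1 + Z_2\|x+t(y-x)-p\|_X\bigr)\,dt\,\|y-x\|_X
\]
has average factor at most $Z_1 + \tfrac12 Z_2(r_- + s)$. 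Now $Q(r_-)=0$ and $Q(s)\le 0$ for $s\in[r_-,r)$, and for $s>r_-$ we have
\[
\frac{Q(s)-Q(r_-)}{s-r_-} = \tfrac12 Z_2(s+r_-) - (1-Z_1) < 0 ,
\]
since $Q(s)<0$ on $(r_-,r)$, as $r$ is below the larger root. Hence the average factor is strictly less than $1$, forcing $y=x$. If $s=r_-$, the factor is $Z_1+Z_2r_-<1$ as shown above. This is the standard Kantorovich uniqueness argument, and it yields the unique zero in $B_r(p)$.
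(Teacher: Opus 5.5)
Your proof is correct and follows essentially the same route as the paper: your map $T(x)=x-AF(x)$ is the paper's fixed-point map $h\mapsto -AF(p)+[I-AF'(p)]h-AE(h)$ written in the variable $x=p+h$, and you likewise use the smaller root $r_-$ of $Q$ to get both the self-map property and the contraction constant $Z_1+Z_2r_-<1$. The only difference is that you prove uniqueness on the larger ball $B_r(p)$ yourself, via the secant-slope bound $Z_1+\tfrac12 Z_2(r_-+s)<1$ that follows from convexity of $Q$, whereas the paper defers this step to Zeidler.
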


\begin{proof}

Since $A$ is injective, it suffices to find $x$ such that $AF(x) = 0$. Letting $x = p + h$ and expanding $F$ as in \eqref{F expand}, the zero-finding problem can then be written as the fixed-point equation

\begin{equation*}
h = T(h), \qquad \qquad T(h) = - A F(p) + \left[I - A F'(p)\right] h - AE(h).
\end{equation*}

\n The proof then proceeds in exactly the same way as that of Theorem~\ref{THM NK}.

\end{proof}

\begin{remark}
\normalfont 

Let us show that the assumptions of the above theorem imply that $F'(p): X \to Y$ is an invertible operator. Since the quadratic $Q$ takes a negative value, it must have two real roots. Moreover, the fact that $Q$ takes a negative value for some positive $r>0$ implies that both roots are positive, which in turn yields $Z_1 < 1$. Using part $(ii)$ and a Neumann series argument, it follows that the operator $A F'(p): X \to X$ is invertible. As the range of $A F'(p)$ (which is the whole space $X$) is a subset of the range of $A$, we conclude that $A: Y \to X$ is both injective and surjective, and thus invertible. Consequently, $F'(p)$ is also invertible. Note that if we choose $A = (F'(p))^{-1}$, then the above theorem reduces to Theorem~\ref{THM NK} with $Y_0 = b$, $Z_1 = 0$ and $Z_2 = a c$.
\end{remark}

\begin{remark}
\normalfont 
As already mentioned, $p$ represents an approximate solution, $F(p) \approx 0$. Thus, the key ingredients in applying the above theorem are a good approximate solution $p$ and a sufficiently accurate approximate inverse $A$ of $F'(p)$. The hypothesis $Q(r) < 0$ quantifies the required accuracy of these approximations and guarantees the existence of a true solution $x$ near $p$.

Typically, as is also the case here, the space $Y$ and its norm do not enter any of the estimates and become irrelevant for the application of this result (note that all the quantities in Theorem~\ref{THM NK3} are measured with respect to the $X$-norm). In our situation, $Y$ is a larger space with $X \subset Y$ continuously embedded. The operator $A$ (a finite-rank perturbation of a diagonal operator) is not only a bounded linear operator from $Y$ to $X$, but also from the smaller space $X$ to $X$. Furthermore, although $F: X \to Y$, the approximate solution $p \in X$ satisfies $F(p) \in X$, and the difference $F'(p+h) - F'(p)$ is also a bounded linear operator from $X$ to $X$. Because of these properties, in our estimates the space $Y$ and its norm do not appear (see Section~\ref{SECT proof of main} below).
\end{remark}

\section{Proof of Theorem~\ref{THM main 2}} \label{SECT proof of main}
\setcounter{equation}{0}

In this section we present the proof of our main result. We begin by rewriting \eqref{TE u reduced}, together with \eqref{sym}, as a first-order system. We then expand the involved functions into Chebyshev series and introduce the operator $F$ as well as the appropriate function spaces $X$ and $Y$. Next, we derive estimates that are used in the application of the Newton–Kantorovich Theorem~\ref{THM NK2} for the case $\omega = 0$. Finally, we conclude the section by studying the case $\omega > 0$, in which a different version of the Newton–Kantorovich theorem is used, together with the estimates derived in the preceding subsections.

\subsection{First-order system} \label{SECT 1st order system}

We can incorporate the symmetry \eqref{sym} into the problem \eqref{TE u reduced} by considering a reduced problem on the half-interval $\left( \frac{1}{2}, 1 \right)$ and imposing appropriate boundary conditions at $\frac{1}{2}$. Indeed, recall that \eqref{sym} implies that $\Re u_j$ is even and $\Im u_j$ is odd with respect to the midpoint $\frac{1}{2}$ for $j=1,2$. If $f$ is a smooth function on $(0, 1)$ that is even with respect to the midpoint, then $f' \left( \frac{1}{2} \right) = 0$. Analogously, if $f$ is a smooth odd function, then $f \left( \frac{1}{2} \right) = 0$. Therefore, we consider the problem

\begin{equation} \label{TE u half interval}
\begin{cases}
\displaystyle u_1'' + \omega^2  u_1  = - u_2 \overline{u}_1 \hspace{0.7in} &\text{in} \ \left( \frac{1}{2}, 1 \right)
\\[.05in]
\displaystyle u_2'' + 4 \omega^2 u_2   = - u_1^2  &\text{in} \ \left( \frac{1}{2}, 1 \right)
\\[.05in]
\Re u_j' \left( \tfrac{1}{2} \right) = \Im u_j \left( \tfrac{1}{2} \right) = 0 & j=1,2
\\[.05in]
u_2(1) =  u_2'(1)  = 0.
\end{cases}
\end{equation}

\n It is now clear that if $u_1, u_2$ solve \eqref{TE u half interval}, then extending $\Re u_j$ to the interval $(0,1)$ as an even function with respect to $\tfrac{1}{2}$, and $\Im u_j$ as an odd function, for $j=1,2$, yields a solution pair of \eqref{TE u reduced} that satisfies \eqref{sym}. Conversely, any solution of \eqref{TE u reduced} satisfying \eqref{sym} also satisfies \eqref{TE u half interval}.

It will be convenient to map the interval $\left( \frac{1}{2}, 1 \right)$ onto $(-1,1)$. This can be achieved by the change of variables $t \mapsto 4t-3$. Abusing notation, we keep the same notation $u_j$ for the functions that are now defined on the interval $(-1,1)$. Note that this change of variables affects the derivatives by introducing a multiplicative factor of 4. Therefore, the problem \eqref{TE u half interval} now becomes

\begin{equation*}
\begin{cases}
\displaystyle 16 u_1'' + \omega^2  u_1  = - u_2 \overline{u}_1 \hspace{0.7in} &\text{in} \ \left( -1 , 1 \right)
\\[.05in]
\displaystyle 16 u_2'' + 4 \omega^2 u_2   = - u_1^2  &\text{in} \ \left( -1, 1 \right)
\\[.05in]
\Re u_j' \left( -1 \right) = \Im u_j \left( -1 \right) = 0 & j=1,2
\\[.05in]
u_2(1) =  u_2'(1)  = 0.
\end{cases}
\end{equation*}

\n To rewrite the above problem as a first-order system, we introduce the notation

\begin{equation*}
\begin{split}
u_1 = \xx_1 + i \xx_2, \qquad \qquad 4u_1'= \xx_3 + i \xx_4
\\
u_2 = \xx_5 + i \xx_6, \qquad \qquad 4u_2'= \xx_7 + i \xx_8
\end{split}
\end{equation*}

\n and setting $\bm{\xx} = (\xx_1,...,\xx_8)$ we arrive at the first-order system

\begin{equation} \label{1st order system}
\begin{cases}
\displaystyle \bm{\xx}' = \bm{\ff}(\bm{\xx}) \hspace{0.7in} &\text{in} \ \left( -1 , 1 \right)
\\[.05in]
\xx_j(-1) = 0 & j = 2, 3, 6, 7
\\[.05in]
\xx_j(1) = 0 & j = 5, 6, 7, 8,
\end{cases}
\end{equation}

\n where, writing $\bm{\ff}$ as a column vector for convenience, we have

\begin{equation} \label{f}
\bm{\ff}(\bm{\xx}) = 
\begin{bmatrix}
\ff_1(\bm{\xx}) \\ \ff_2(\bm{\xx}) \\ \ff_3(\bm{\xx}) \\ \ff_4(\bm{\xx}) \\ \ff_5(\bm{\xx}) \\ \ff_6(\bm{\xx}) \\ \ff_7(\bm{\xx}) \\ \ff_8(\bm{\xx}) \\
\end{bmatrix}
=
\frac{1}{4} 
\begin{bmatrix} 
\xx_3 \\ \xx_4 \\
-\xx_1 \xx_5 - \xx_2 \xx_6 - \omega^2 \xx_1 \\
\xx_2 \xx_5 - \xx_1 \xx_6 - \omega^2 \xx_2 \\
\xx_7 \\ \xx_8 \\
-\xx_1^2 + \xx_2^2 - 4\omega^2 \xx_5 \\ 
-2\xx_1 \xx_2 - 4\omega^2 \xx_6
\end{bmatrix}.
\end{equation}

\subsection{Chebyshev expansions and the operator $F$} \label{SECT Cheb exp}

Our goal is to expand the functions $\xx_j(t)$, for $j=1,...,8$, in terms of Chebyshev polynomials $T_n(t)$ of the first kind and order $n$, and to rewrite the system \eqref{1st order system} by regarding the Chebyshev coefficients of these expansions as our unknowns. Approximating functions by Chebyshev polynomials is a classic approach, and in fact any Lipschitz continuous function on $[-1,1]$ has a Chebyshev series expansion that converges uniformly and absolutely \cite{Trefethen2013}. Thus, we write

\begin{equation} \label{Cheb expansion}
\xx_j(t) = x_{j,0} + 2 \sum_{n=1}^\infty x_{j,n} T_n(t),
\end{equation}

\n where $x_j = (x_{j,n})_{n \geq 0}$ denotes the sequence of Chebyshev coefficients, and we use the boldface notation $\bm{x}=(x_1,...,x_8)$ to denote the 8-tuple of sequences of Chebyshev coefficients corresponding to the vector function $\bm{\xx}(t)$.

\vspace{.1in}

\n \textbf{Notation:} To avoid any confusion, we use the font $\xx$ (similarly $\ff$, etc.) to denote functions, while $x$ (resp. $f$, etc.) denotes the corresponding sequence of Chebyshev coefficients. Further, given a sequence $x_j$ we use $x_{j,n}$ to denote its $n$-th term. At times, we also use the alternative notation $[x_j]_n$ to denote the $n$-th term. Finally, $[\bm{x}]_n$ denotes the 8-tuple consisting of the $n$-th terms of its component sequences.

\vspace{.1in}

Continuing, we expand the right-hand sides in \eqref{f} (which are quadratic expressions in terms of $\xx_j$) into their respective Chebyshev series:  

\begin{equation} \label{f_j Cheb expansion}
\ff_j(\bm{\xx}(t)) = f_{j,0}(\bm{x}) + 2 \sum_{n=1}^\infty f_{j,n}(\bm{x}) T_n(t).
\end{equation}

\n Here the notation $f_{j,n}(\bm{x})$ indicates the dependence of these coefficients on the Chebyshev coefficients $\bm{x}$. We are going to integrate the ODE in \eqref{1st order system}. To that end, we will need the following:

\begin{lemma}
Let $\ff(t) = f_0 +  2 \sum_{n=1}^\infty f_n T_n(t)$, then

\begin{equation*}
\int \ff(t) dt = \sum_{n=1}^\infty \frac{f_{n-1}-f_{n+1}}{n} T_n(t) + \text{const}.
\end{equation*}

\end{lemma}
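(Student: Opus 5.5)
The plan is to use the classical Chebyshev integration identity, derived from the trigonometric substitution $t=\cos\theta$. Under this substitution $T_n(t)=\cos(n\theta)$ and $dt = -\sin\theta\, d\theta$. The key fact to establish first is
\begin{equation*}
\frac{d}{dt}\left[\frac{T_{n+1}(t)}{n+1} - \frac{T_{n-1}(t)}{n-1}\right] = 2T_n(t), \qquad n\ge 2,
\end{equation*}
together with the low-order cases $\int T_0\,dt = T_1$ and $\int T_1\,dt = \tfrac14 T_2 + \text{const}$ (since $T_1 = t$ and $T_2 = 2t^2-1$). To verify the displayed identity, differentiate $\cos((n\pm1)\theta)$ with respect to $t$, which gives $T_{m}'(t) = m\sin(m\theta)/\sin\theta$. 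The difference then becomes $[\sin((n+1)\theta)-\sin((n-1)\theta)]/\sin\theta = 2\cos(n\theta)$ by the sum-to-product formula.

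Next I integrate the series term by term: $\int \ff\,dt = f_0 T_1 + 2f_1\cdot\tfrac14 T_2 + \sum_{n\ge2} f_n\bigl[\tfrac{T_{n+1}}{n+1}-\tfrac{T_{n-1}}{n-1}\bigr] + \text{const}$. I then collect the coefficient of each $T_m$ with $m\ge1$.
\begin{itemize}
\item \emph{Contributions from $n=m-1$.} These give $f_{m-1}/m$. For $m=1$ this comes from $f_0T_1$; for $m=2$ it comes from $\tfrac12 f_1 T_2 = \tfrac{f_1}{2}T_2$, which matches $f_{m-1}/m$.
\item \emph{Contributions from $n=m+1$.} These give $-f_{m+1}/m$, valid for all $m\ge1$ since $n=m+1\ge2$.
\item \emph{The $T_0$ terms.} These arise from $n=1$ via $T_0$-multiples and are absorbed into the constant.
\end{itemize}
Summing, the coefficient of $T_m$ is $(f_{m-1}-f_{m+1})/m$, as claimed.

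The only delicate point is justifying the term-by-term integration and the rearrangement of the series. I would assume the coefficients are summable, $\sum|f_n|<\infty$, which holds for the setting of the paper (Lipschitz functions, and coefficient sequences in weighted $\ell^1$ spaces). With $|T_n|\le1$ on $[-1,1]$, both the original series and the integrated series converge absolutely and uniformly, so integrating term by term and regrouping are legitimate. I expect this convergence bookkeeping, together with the $n=1$ boundary case, to be the only places that need care; the algebra itself is routine.
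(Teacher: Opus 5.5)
Your proposal is correct and follows essentially the same route as the paper. Both integrate the recurrence $2T_n = \frac{T_{n+1}'}{n+1} - \frac{T_{n-1}'}{n-1}$ for $n\ge 2$ term by term, treat the $T_0$ and $T_1$ terms separately (the paper writes $f_1 t^2 = \frac{f_1}{2} + \frac{f_1}{2}T_2$), and reindex; your derivation of the recurrence via $t=\cos\theta$ and your justification of term-by-term integration from $\sum_n |f_n| < \infty$ are details the paper leaves implicit.
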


\begin{proof}
Integrating the recurrence relation $2 T_n = \frac{1}{n+1} T_{n+1}' - \frac{1}{n-1} T_{n-1}'$ for $n\geq 2$ and using that $T_0(t)=1$ and $T_1(t)=t$, we obtain (suppressing the additive constant of integration in the notation)

\begin{equation*}
\int \ff(t) dt = f_0 t + f_1 t^2 + \sum_{n=2}^\infty f_n \left( \frac{T_{n+1}}{n+1} - \frac{T_{n-1}}{n-1} \right).  
\end{equation*}

\n In the above formula we then replace $t = T_1$ and $t^2 = \frac{1+T_2}{2}$. We further separate the sum into two parts and reindex them to obtain

\begin{equation*}
\int \ff(t) dt = \frac{f_1}{2} + f_0 T_1 + \frac{f_1}{2} T_2 + \sum_{k=3}^\infty \frac{f_{k-1}}{k} T_k - \sum_{k=1}^\infty \frac{f_{k+1}}{k} T_k 
\end{equation*}

\n The first term can be ignored, as it is a constant and can be combined with the additive constant of integration. The second and third terms can be included in the first sum if we adjust the summation index to start from $k=1$. We then combine this with the second sum and conclude the proof.
\end{proof}

\n Let us now integrate the differential equation $\xx_j'(t) = \ff_j(\bm{\xx}(t))$ using the Chebyshev expansion \eqref{f_j Cheb expansion} and the above lemma to obtain, for $j=1,...,8$,

\begin{equation} \label{x_j,n relation}
2x_{j,n} = \frac{f_{j, n-1}(\bm{x}) - f_{j, n+1}(\bm{x})}{n}, \qquad \qquad n=1,2,...
\end{equation}

\n Note that there is no condition on the zeroth-order coefficient $x_{j,0}$ because of the free constant of integration. Conditions on this coefficient arise through the boundary conditions \eqref{1st order system}. In view of the formulas $T_n(-1) = (-1)^n$ and $T_n(1)=1$, we can rewrite these boundary conditions as

\begin{equation} \label{BC x_j,n}
\begin{cases}
\displaystyle x_{j,0} + 2 \sum_{n=1}^\infty (-1)^n x_{j,n} = 0, \qquad \qquad &j = 2, 3, 6, 7
\\[.2in]
\displaystyle x_{j,0} + 2 \sum_{n=1}^\infty x_{j,n} = 0, &j = 5, 6, 7, 8.
\end{cases}
\end{equation}

\n Thus, the system \eqref{1st order system} can be rewritten as \eqref{x_j,n relation} and \eqref{BC x_j,n} in terms of the Chebyshev coefficients. Let us now introduce $\bm{F} = (F_1,...,F_8)$ so that the system can be conveniently written as $\bm{F}(\bm{x}) = 0$. Each $F_j$ is a sequence with terms $F_{j,n}$. The relation \eqref{x_j,n relation} can be written as $F_{j,n}(\bm{x}) = 0$ for $n\geq 1$, while the boundary conditions \eqref{BC x_j,n} are incorporated at index $n=0$ and rewritten as $F_{j,0}(\bm{x}) = 0$. To rewrite the boundary conditions more conveniently, let us introduce the sequences

\begin{equation} \label{alpha beta}
\alpha_n = 
\begin{cases}
1, \ n=0
\\
2, \ n \geq 1
\end{cases},
\qquad \qquad
\beta_n = (-1)^n \alpha_n
\end{equation}

\n and define

\begin{equation} \label{B 1-4}
B_1\bm{x} = \sum_{n=0}^\infty \beta_n x_{2,n}, \quad
B_2\bm{x} = \sum_{n=0}^\infty \beta_n x_{3,n}, \quad
B_3\bm{x} = \sum_{n=0}^\infty \beta_n x_{6,n}, \quad
B_4\bm{x} = \sum_{n=0}^\infty \beta_n x_{7,n},
\end{equation}

\n which correspond to the first four conditions in \eqref{BC x_j,n} and for the next four conditions set

\begin{equation} \label{B 5-8}
B_j\bm{x} = \sum_{n=0}^\infty \alpha_n x_{j,n}, \qquad \qquad j=5, 6, 7, 8.
\end{equation}

\n In summary, \eqref{x_j,n relation} and \eqref{BC x_j,n} can be rewritten as 

\begin{equation} \label{F = 0}
\bm{F}(\bm{x}) = 0,
\end{equation}

\n where, for $j=1,..,8$,

\begin{equation} \label{F_j,n}
F_{j,n}(\bm{x}) = 
\begin{cases}
B_j\bm{x}, \hspace{.5in} &n=0
\\
2n x_{j,n} - f_{j,n-1}(\bm{x}) + f_{j,n+1}(\bm{x}), & n \geq 1
\end{cases}
\end{equation}

\vspace{.1in}

Let us next rewrite the above equations in a more convenient vector form. To that end, we first introduce the linear operators $M$ and $S$, defined on the space of sequences (for the concrete function spaces, see the next section), as follows: for any sequence $a=(a_n)_{n \geq 0}$,

\begin{equation} \label{M S}
[Ma]_n = 2 n a_n, \qquad n \geq 0 \qquad \text{and} \qquad
[S a]_n = - a_{n-1} + a_{n+1}, \qquad n \geq 1.
\end{equation}

\n Here recall that square brackets with subscript $n$ denote the $n$-th term of the corresponding sequence, and we also set $[S a]_0 = 0$. The second equation of \eqref{F_j,n} now becomes $F_{j,n}(\bm{x}) = [M x_j + S f_j(\bm{x})]_n$ for all $n \geq 1$. Let us further set 

\begin{equation*}
\bm{B} \bm{x} = (B_1 \bm{x},..., B_8 \bm{x}), \qquad \bm{f} (\bm{x}) = \left(f_1 (\bm{x}),..., f_8 (\bm{x}) \right)
\end{equation*}

\n and

\begin{equation*}
\bm{M} \bm{x} = (M x_1,...,M x_8), \qquad \bm{S} \bm{x} = (S x_1,...,S x_8).
\end{equation*}

\n Then we can rewrite \eqref{F_j,n} in the vector form as

\begin{equation} \label{F M S vector}
\bm{F}(\bm{x}) = 
\begin{cases}
\bm{B} \bm{x} \hspace{.5in} &n=0
\\
\bm{M} \bm{x} + \bm{S} \bm{f}(\bm{x}), & n\geq 1
\end{cases}
\end{equation}

\n where we have dropped $[\cdot]_n$ from either side of the equation for notational convenience.

\subsection{Function spaces} \label{SECT function spaces}

\n Let us now introduce the appropriate function spaces for analyzing the equation \eqref{F = 0}. For $\nu \geq 1$, we define the Banach space of weighted $\ell^1$ sequences

\begin{equation*}
\ell^1_\nu = \left\{ a = (a_n)_{n\geq 0} \ : \ \|a\|_{\ell^1_\nu} < \infty \right\}, \qquad \qquad \|a\|_{\ell^1_\nu} = |a_0| + 2 \sum_{n=1}^\infty |a_n| \nu^n.
\end{equation*}

\n Note that larger values of $\nu$ correspond to faster decay of the sequence $a_n$, and as a result, the corresponding function whose Chebyshev coefficients are $a_n$ is more regular. For instance, if a function extends to a complex analytic function in an ellipse with foci at $\pm 1$, then its Chebyshev coefficients decay exponentially \cite{Trefethen2013}. It is clear that $S : \ell^1_\nu \to \ell^1_\nu$ is a bounded operator. However, if $a \in \ell^1_\nu$, the sequence $(n a_n)_{n \geq 0}$ does not, in general, lie in this space. In other words, the operator $M$ does not map $\ell^1_\nu$ to itself. Therefore, we also introduce the larger space

\begin{equation*}
\tilde{\ell}^1_\nu = \left\{ b = (b_n)_{n\geq 0} \ : \ \|b\|_{\tilde{\ell}^1_\nu} < \infty \right\}, \qquad \qquad \|b\|_{\tilde{\ell}^1_\nu} = |b_0| + 2 \sum_{n=1}^\infty\frac{|b_n|}{n} \nu^n.
\end{equation*}

\n It is now evident that $M: \ell^1_\nu \to \tilde{\ell}^1_\nu$ is a bounded operator. We next introduce the product Banach spaces

\begin{equation} \label{X}
X = \left( \ell^1_\nu \right)^8, \qquad \qquad \|\bm{x}\|_X = \max_{j=1,...,8} \|x_j\|_{\ell^1_\nu}
\end{equation}

\n and 

\begin{equation} \label{Y}
Y = \left( \tilde{\ell}^1_\nu \right)^8, \qquad \qquad \|\bm{y}\|_Y = \max_{j=1,...,8} \|y_j\|_{\tilde{\ell}^1_\nu}.
\end{equation}

\begin{lemma} \label{THM F well defined}

Let the nonlinear operators $\bm{f}$ and $\bm{F}$ be as in \eqref{f} and \eqref{F M S vector}, then

\begin{enumerate}
\item[(i)] $\bm{f} : X \to X$ is defined on all of $X$ and is Fr{\'e}chet differentiable there.

\item[(ii)] $\bm{F} : X \to Y$ is defined on all of $X$ and is Fr{\'e}chet differentiable there.
\end{enumerate}

\end{lemma}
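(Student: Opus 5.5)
The plan is to reduce everything to the Banach algebra structure of $\ell^1_\nu$ under the Chebyshev product. The first step is to identify $f_j(\bm{x})$ explicitly. Since each $\ff_j$ in \eqref{f} is a quadratic polynomial in $\xx_1,\dots,\xx_8$, its Chebyshev coefficients are obtained from those of the $\xx_k$ through the discrete convolution corresponding to the product of two Chebyshev series. With the normalization \eqref{Cheb expansion}, the product of $\mathsf{a}=a_0+2\sum a_nT_n$ and $\mathsf{b}=b_0+2\sum b_nT_n$ has coefficients
\[
(a*b)_n=\sum_{k\in\mathbb{Z}} a_{|k|}\,b_{|n-k|}.
\]
This follows from $2T_mT_k=T_{m+k}+T_{|m-k|}$, or equivalently from viewing Chebyshev series as even Fourier series in $\theta$ with $t=\cos\theta$. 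Thus $f_j(\bm{x})$ is a finite sum of terms of the form $\tfrac14 c\, x_k*x_l$ and $\tfrac14 c\,\omega^2 x_k$, with constant coefficients $c$.

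The key step is to show that $\ell^1_\nu$ is a Banach algebra under $*$, namely $\|a*b\|_{\ell^1_\nu}\le \|a\|_{\ell^1_\nu}\|b\|_{\ell^1_\nu}$. Extending $a$ to the two-sided even sequence $a_{|k|}$, the norm becomes $\|a\|_{\ell^1_\nu}=\sum_{k\in\mathbb{Z}}|a_{|k|}|\nu^{|k|}$. The weight is submultiplicative, since $\nu^{|n|}\le \nu^{|k|}\nu^{|n-k|}$ for $\nu\ge1$. The estimate is then the standard weighted Young inequality on $\mathbb{Z}$: the convolution of the two-sided extensions is again even, and it equals the two-sided extension of $a*b$. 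This identification of the two-sided convolution with the one-sided Chebyshev formula is the only point requiring care, and I expect it to be the main technical detail.

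Given the algebra property, part (i) follows quickly. Each $f_j:X\to\ell^1_\nu$ is a continuous quadratic polynomial map, so it is defined on all of $X$. It is Fréchet differentiable, with derivative
\[
Df_j(\bm{x})\bm{h}=\text{the sum of the corresponding terms } x_k*h_l+h_k*x_l \text{ (and the linear } \omega^2 \text{ terms)}.
\]
Indeed, the remainder consists of terms $h_k*h_l$, which are bounded by $\|\bm{h}\|_X^2$; hence the remainder is $o(\|\bm{h}\|_X)$, and the derivative is a bounded operator.

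For part (ii), write $\bm{F}$ as the sum of the linear map $\bm{x}\mapsto(\bm{B}\bm{x},\bm{M}\bm{x})$ and the nonlinear map $\bm{S}\circ\bm{f}$. It then suffices to check that each piece is bounded into $Y$.

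1. The boundary map satisfies $|B_j\bm{x}|\le\sum_n\alpha_n|x_{k,n}|\le\|x_k\|_{\ell^1_\nu}$, using $\nu\ge1$, so each $B_j$ is a bounded functional.
2. For $M$ we have $\|Ma\|_{\tilde\ell^1_\nu}=2\sum_{n\ge1}2|a_n|\nu^n\le 2\|a\|_{\ell^1_\nu}$.
3. $S$ is bounded from $\ell^1_\nu$ into $\ell^1_\nu$ (with norm at most $\nu+\nu^{-1}$, by shifting indices), and $\ell^1_\nu\hookrightarrow\tilde\ell^1_\nu$ continuously since $1/n\le1$.

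Composing the bounded linear maps with the differentiable map $\bm{f}$ from part (i) gives that $\bm{F}:X\to Y$ is defined on all of $X$ and is Fréchet differentiable, with
\[
D\bm{F}(\bm{x})=(\bm{B},\ \bm{M}+\bm{S}\,D\bm{f}(\bm{x})).
\]
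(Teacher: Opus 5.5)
Your proof is correct and follows the paper's route. You first establish the Banach algebra property of $\ell^1_\nu$; your two-sided weighted Young inequality is the same estimate as the paper's use of $2T_nT_m=T_{n+m}+T_{|n-m|}$ with $\nu^{|n-m|}\le\nu^{n+m}$. You then get differentiability of the quadratic map $\bm{f}$, and finally compose with the bounded maps $\bm{B}$, $\bm{M}$, $\bm{S}$ and the embedding $X\subset Y$.

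One slip does not affect the lemma: the claim $\|S\|_{B(\ell^1_\nu)}\le \nu+\nu^{-1}$ is false for $\nu>1$. The reason is that $a_0$ carries weight $1$ rather than $2$. For example, $Se_0$ has the single entry $-1$ at $n=1$, so $\|Se_0\|_{\ell^1_\nu}=2\nu$, and $2\nu$ is the correct bound (Lemma~\ref{LEM S bound}).
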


Note that part $(ii)$ directly follows from part $(i)$. Indeed, since $X \subset Y$ and this embedding is continuous, the image of $\bm{f}$ also lies in $Y$. Further, $\bm{M} : X \to Y$ and $\bm{S} : X \to X$ are bounded linear operators. Let us show that $\bm{f}$ is well defined on all of $X$ and maps this space into itself. Establishing Fr{\'e}chet differentiability is straightforward, and we omit the details (see Lemma~\ref{LEM f} and \eqref{F'p} for the formula of the Fr{\'e}chet derivative). Since the components $f_j$ of $\bm{f}$ correspond to Chebyshev coefficients of products of two functions -- i.e., the nonlinearities in \eqref{f} are quadratic -- the desired result follows from the following Banach algebra property of the space $\ell^1_\nu$ with respect to Chebyshev products, namely part $(i)$ of the lemma below (part $(ii)$ of the lemma is used later, in part $(ii)$ of Lemma~\ref{LEM f}).

\begin{lemma} \label{LEM Cheb product}
Let $\xx(t), \yy(t)$ be two functions with Chebyshev coefficients $x=(x_n)_{n\geq 0}$ and $y=(y_n)_{n\geq 0}$, respectively. Let $z=(z_n)_{n \geq 0}$ denote the Chebyshev coefficients of the product $\zz(t) = \xx(t) \yy(t)$, then the following hold true:

\begin{enumerate}
\item[(i)] $\displaystyle \|z\|_{\ell^1_\nu} \leq \|x\|_{\ell^1_\nu} \|y\|_{\ell^1_\nu}$

\item[(ii)] Let $N \geq 1$ be an integer and assume that 

\begin{equation*}
\begin{cases}
x_n = 0, \quad &n \geq N
\\
y_n = 0, & n \leq 2N-1
\end{cases},
\qquad \text{then} \qquad
z_n = 0, \quad n \leq N.
\end{equation*}
\end{enumerate}

\end{lemma}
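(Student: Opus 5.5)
The plan is to work directly with the Chebyshev product formula. First I derive an explicit convolution for $z$ in terms of $x$ and $y$, and then read off both (i) and (ii) from it.

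The starting point is the identity $T_m(t)T_k(t) = \tfrac12\big(T_{m+k}(t) + T_{|m-k|}(t)\big)$. With the normalization $\xx = x_0 + 2\sum_{n\ge1} x_n T_n$, it is convenient to extend the coefficients symmetrically by $x_{-n} := x_n$, so that $\xx(t) = \sum_{n\in\mathbb{Z}} x_n T_{|n|}(t)$. Equivalently, under $t=\cos\theta$ this is the Fourier series $\sum_{n\in\mathbb{Z}} x_n e^{in\theta}$, since $T_{|n|}(\cos\theta) = \cos(n\theta)$. The product of two Fourier series has coefficients given by the discrete convolution, so
\begin{equation*}
z_n = \sum_{k\in\mathbb{Z}} x_{n-k}\, y_k, \qquad n \ge 0,
\end{equation*}
with $z_{-n}=z_n$ holding automatically. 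Interchanging the sums is justified by absolute convergence, since all sequences lie in $\ell^1_\nu \subset \ell^1$.

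For part (i), note that $\|a\|_{\ell^1_\nu} = \sum_{n\in\mathbb{Z}} |a_n|\nu^{|n|}$ for the symmetric extension. The weight is submultiplicative: $\nu^{|n|} \le \nu^{|n-k|}\nu^{|k|}$, because $\nu\ge1$ and $|n|\le |n-k|+|k|$. Therefore
\begin{equation*}
\|z\|_{\ell^1_\nu} \le \sum_{n}\sum_k |x_{n-k}||y_k|\,\nu^{|n-k|}\nu^{|k|} = \|x\|_{\ell^1_\nu}\|y\|_{\ell^1_\nu},
\end{equation*}
by Tonelli's theorem. In this argument the sum over all $n\in\mathbb{Z}$ is legitimate, since we only need the $\ell^1_\nu$ norm of $z$, which already counts $n$ and $-n$ through the factor $2$.

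For part (ii), fix $0\le n\le N$ and look at a term $x_{n-k}y_k$ of the convolution. It can be nonzero only if $|n-k|\le N-1$ and $|k|\ge 2N$. The first condition gives $|k| \le |n| + |n-k| \le N + N-1 = 2N-1$, which contradicts the second. So every term vanishes and $z_n = 0$ for all $n\le N$.

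The main point requiring care is the bookkeeping of the factor $2$ in the normalization, which is what makes the symmetric-extension convolution formula exact. I would double-check it on the case $n=0$: directly, $z_0 = x_0y_0 + 2\sum_{k\ge1} x_k y_k$, and this agrees with the formula, since $\tfrac{1}{\pi}\int_0^\pi \cos(k\theta)^2\,d\theta = \tfrac12$ combined with the coefficient factors $2\cdot2$ gives exactly $2$. Once this is confirmed, (i) and (ii) follow directly as above.
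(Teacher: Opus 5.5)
Your proof is correct and is essentially the paper's argument: the paper expands the product using $2T_nT_m = T_{n+m}+T_{|n-m|}$ on one-sided coefficients and bounds with $\nu^{|n-m|}\le\nu^{n+m}$, which is your two-sided convolution with the submultiplicative weight $\nu^{|n|}\le\nu^{|n-k|}\nu^{|k|}$ in different bookkeeping. Likewise, your single triangle inequality for (ii) is the same index count as the paper's check that $n+m\ge 2N$ and $|n-m|\ge N+1$.
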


\begin{proof}
Let us set $\tilde{x}_0 = x_0$ and $\tilde{x}_n = 2 x_n$ for $n\geq 1$, so that $\xx(t) = \sum_{n\geq 0} \tilde{x}_n T_n(t)$. Similarly define $\tilde{y}_n$ and $\tilde{z}_n$. In view of the relation

\begin{equation*}
2 T_n T_m = T_{n+m} +T_{|n-m|},
\end{equation*}

\n which holds for all $n,m \geq 0$, we obtain that for all $k \geq 0$

\begin{equation} \label{product}
\tilde{z}_k = \frac{1}{2} \sum_{n,m = 0}^\infty \tilde{x}_n \tilde{y}_m \left( \delta_{n+m, k} + \delta_{|n-m|, k} \right).
\end{equation}

\n Here $\delta$ denotes the Kronecker delta. Consequently,

\begin{equation*}
\|z\|_{\ell^1_\nu} = \sum_{k = 0}^\infty |\tilde{z}_k| \nu^k \leq \frac{1}{2} \sum_{n,m = 0}^\infty |\tilde{x}_n| \, |\tilde{y}_m| \left( \nu^{n+m} + \nu^{|n-m|} \right).   
\end{equation*}

\n To conclude the proof of part $(i)$, it remains to use the estimate $\nu^{|n-m|} \leq \nu^{n+m}$ and separate the double sums into a product of two sums using $\nu^{n+m} = \nu^n \nu^m$.

Let us now turn to part $(ii)$. By assumption, in the sum \eqref{product} we must have $n \leq N-1$ and $m \geq 2N$, otherwise the corresponding terms in the sum are zero. In particular, $m+n \geq 2N$ and $|m-n| \geq N+1$, and consequently, $\tilde{z}_k = 0$ for all $k \leq N$.

\end{proof}

\subsection{Application of Theorem~\ref{THM NK2}}

Our goal is to apply the Newton–Kantorovich Theorem~\ref{THM NK2} to the operator $\bm{F}$ defined in \eqref{F_j,n}, acting between the spaces $X$ and $Y$ as introduced in the previous section.

\subsubsection{The point $p$ and the operator $A$} \label{SECT p and A}

To start, we need an approximate solution of the equation \eqref{F = 0}, namely, a point $\bm{p} \in X$ such that $\bm{F}(\bm{p}) \approx 0$. This is obtained numerically (see Section~\ref{SECT omega=0}). In particular, $\bm{p} = (p_1,...,p_8)$ is a finite sequence -- more precisely, an 8-tuple of finite sequences. For the purposes of this part, this is the only information we need about the point $\bm{p}$. Given an integer $N \geq 1$, we introduce the following truncation operators: for any sequence $a \in \ell_1^\nu$,

\begin{equation*}
[\pi^N a]_n = 
\begin{cases}
a_n, \qquad &n=0,...,N
\\
0, &n>N
\end{cases}
\qquad \qquad
[\pi_N a]_n = 
\begin{cases}
0, \qquad &n=0,...,N
\\
a_n, &n>N.
\end{cases}
\end{equation*}

\n We extend these operators to the product space $X$ in the usual way: $\bm{\pi}_N \bm{p} = (\pi_N p_1,...,\pi_N p_8)$, and similarly for $\bm{\pi}^N$. Next, we introduce the operator $A : Y \to X$, which serves as an approximate inverse to $\bm{F}'(\bm{p})$. Differentiating the equation \eqref{F M S vector} and using the fact that $\bm{B}$, $\bm{M}$ and $\bm{S}$ are linear operators, we obtain

\begin{equation} \label{F'p}
\bm{F}'(\bm{p}) = 
\begin{cases}
\bm{B} \hspace{.5in} &n=0
\\
\bm{M} + \bm{S} \bm{f}'(\bm{p}), & n\geq 1.
\end{cases}
\end{equation}

\begin{remark}
\normalfont To be more precise, the second equation of \eqref{F'p} means that for all $j=1,...,8$ and $\bm{w} \in X$,

\begin{equation*}
\left[ \bm{F}'(\bm{p}) \bm{w} \right]_{j, n} = \left[ \bm{M} \bm{w} + \bm{S} \bm{f}'(\bm{p}) \bm{w} \right]_{j,n} \qquad \qquad n \geq 1.
\end{equation*}

\n Here, $[\cdot]_{j, n}$ denotes the $n$-th term of the $j$-th sequence in a given 8-tuple of sequences.
\end{remark}

\vspace{.1in}

We can write $\bm{F}'(\bm{p}) = \bm{\pi}^N \bm{F}'(\bm{p}) + \bm{\pi}_N \bm{F}'(\bm{p})$. Since $N \geq 1$, in the second term we can use the second equation of \eqref{F'p}, and the first term we can split again using the projection operators to arrive at

\begin{equation} \label{F'(p) expansion}
\bm{F}'(\bm{p}) = \bm{\pi}^N \bm{F}'(\bm{p}) \bm{\pi}^N + \bm{\pi}_N \bm{M} + \bm{\pi}^N \bm{F}'(\bm{p}) \bm{\pi}_N + \bm{\pi}_N \bm{S} \bm{f}'(\bm{p}). 
\end{equation}

\n Now, the first term above is the ``finite part" of the operator $\bm{F}'(\bm{p})$: it takes finite (8-tuples of) sequences as input and returns finite (8-tuples of) sequences. In our later calculations, when we numerically obtain $\bm{p}$, we can then numerically invert this operator, since it can be represented as a matrix. Note that the operator $\bm{A}$ in Theorem~\ref{THM NK2} must be injective. Therefore, working only with the finite-part operator is not sufficient, and we must combine the finite part with a ``tail" operator to achieve injectivity. The second operator in \eqref{F'(p) expansion} defines a tail operator, whose nonzero terms start from index $n=N+1$. In particular, there is no intersection with the finite-part operator. Thus, as the operator $\bm{A}$, we take the inverse of $\bm{\pi}^N \bm{F}'(\bm{p}) \bm{\pi}^N + \bm{\pi}_N \bm{M}$. More precisely, we now summarize the assumptions on $\bm{p}$ and $\bm{A}$ that will be used to derive all the estimates for the application of Theorem~\ref{THM NK2}.

\begin{enumerate}
\item[(H1)] $\bm{p} \in X$ is a finite sequence of size $N$, i.e. $\bm{\pi}_N \bm{p} = 0$

\item[(H2)] $\bm{A} = \bm{A}^N + \bm{A}_N$, where 

\begin{enumerate}
\item[(a)] $\bm{A}^N$ is a ``finite" operator, i.e.

\begin{equation} \label{A^N pi}
\bm{\pi}_N \bm{A}^N = \bm{A}^N \bm{\pi}_N = 0
\end{equation}

\n and it is injective in the space $\bm{\pi}^N X$ of (8-tuples of) finite sequences. 

\vspace{.1in}

\item[(b)] $\displaystyle [A_N a]_n =
\begin{cases}
0, \qquad &n=0,...,N
\\[.1in]
\dfrac{a_n}{2n}, & n>N
\end{cases}$  for any sequence $a$ and $\bm{A}_N$ is the extension of 

\vspace{.05in}

$A_N$ to the product space $X$, obtained by applying it componentwise.
\end{enumerate}

\end{enumerate}

\begin{remark}
\normalfont Note that the operator $\bm{\pi}^N$ (resp. $\bm{\pi}_N$) can be applied from the left as well as from the right of $\bm{A}^N$ (resp. $\bm{A}_N$) without affecting it. We will use this property in the calculations below. Further, $\bm{A}_N$ is obtained by inverting the tail of the operator $\bm{M}$:

\begin{equation} \label{BM=Id}
\bm{A}_N \bm{M} = \bm{\pi}_N I.
\end{equation} 
\end{remark}

\vspace{.1in}

Note that $\bm{A}: X \to X$ is an injective, bounded operator and

\begin{equation} \label{A_N bound}
\|\bm{A}_N\|_{B(X)} \leq \frac{1}{2(N+1)}.
\end{equation}

\n We are going to derive estimates for the quantities appearing in parts $(i)$–$(iii)$ of the Newton–Kantorovich Theorem~\ref{THM NK2} under the hypotheses (H1) and (H2). In other words, we will obtain expressions for the constants $Y_0$, $Z_1$ and $Z_2$ in terms of ``finite" quantities that can be numerically evaluated by choosing $\bm{p}$ such that $F(\bm{p}) \approx 0$ and by taking $\bm{A}^N$ to be a numerical inverse of the finite part $\bm{\pi}^N F'(\bm{p}) \bm{\pi}^N$, which in particular satisfies assumption (H2). We will then verify the hypothesis on $Q$ in Theorem~\ref{THM NK2}.

\subsubsection{Two auxiliary lemmas}

\begin{lemma} \label{LEM S bound}
Let $\bm{S} : X \to X$ be the operator defined by \eqref{M S}, then

\begin{equation*}
\|\bm{S}\|_{B(X)} \leq 2\nu.
\end{equation*}

\end{lemma}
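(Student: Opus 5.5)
Since $\|\bm{x}\|_X = \max_j \|x_j\|_{\ell^1_\nu}$ and $\bm{S}$ acts componentwise, $\|\bm{S}\bm{x}\|_X = \max_j \|S x_j\|_{\ell^1_\nu}$. It therefore suffices to prove the scalar bound $\|S a\|_{\ell^1_\nu} \leq 2\nu \|a\|_{\ell^1_\nu}$ for every $a \in \ell^1_\nu$. Taking the maximum over $j=1,\dots,8$ then gives the claim.

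For the scalar bound, recall that $[Sa]_0 = 0$ and $[Sa]_n = -a_{n-1} + a_{n+1}$ for $n \geq 1$. Hence
\begin{equation*}
\|Sa\|_{\ell^1_\nu} = 2\sum_{n=1}^\infty |a_{n+1} - a_{n-1}| \nu^n \leq 2\sum_{n=1}^\infty |a_{n-1}|\nu^n + 2\sum_{n=1}^\infty |a_{n+1}|\nu^n .
\end{equation*}
We reindex each sum separately.

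\textbf{First sum.} Setting $m = n-1$ gives
\begin{equation*}
2\sum_{m\geq 0}|a_m|\nu^{m+1} = 2\nu|a_0| + 2\nu\sum_{m\geq 1}|a_m|\nu^m .
\end{equation*}

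\textbf{Second sum.} Setting $m = n+1$ gives
\begin{equation*}
2\sum_{m\geq 2}|a_m|\nu^{m-1} = \frac{2}{\nu}\sum_{m\geq 2}|a_m|\nu^m .
\end{equation*}

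\textbf{Combining.} Adding the two, the coefficient of $|a_0|$ is $2\nu$. The coefficient of $2|a_m|\nu^m$ is $\nu$ for $m = 1$ and $\nu + \nu^{-1}$ for $m \geq 2$. We need to compare these with $2\nu$ times the weights $1$ and $2|a_m|\nu^m$ appearing in $\|a\|_{\ell^1_\nu}$.

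\begin{itemize}
\item For $a_0$, the coefficient is $2\nu$, which equals what is needed.
\item For $m \geq 2$, we use $\nu + \nu^{-1} \leq 2\nu$, which holds exactly because $\nu \geq 1$, i.e. $\nu^{-1} \leq \nu$.
\item For $m = 1$, the coefficient $\nu$ is bounded by $2\nu$.
\end{itemize}

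This yields $\|Sa\|_{\ell^1_\nu} \leq 2\nu\|a\|_{\ell^1_\nu}$, and hence $\|\bm{S}\|_{B(X)} \leq 2\nu$. The only step needing care is matching the coefficient of $a_0$, which carries weight $1$ rather than $2$ in the norm. It contributes exactly $2\nu|a_0|$, so the constant $2\nu$ is sharp in that coordinate, and nothing else is delicate.
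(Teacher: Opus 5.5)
Your proof is correct and follows essentially the same route as the paper: the triangle inequality on $|a_{n+1}-a_{n-1}|$, reindexing the two resulting sums, and using $\nu^{-1}\le\nu$ for $\nu\ge 1$. The only difference is packaging: the paper writes the intermediate bound $(\nu+\nu^{-1})\|a\|_{\ell^1_\nu}+(\nu-\nu^{-1})|a_0|$ and then uses $|a_0|\le\|a\|_{\ell^1_\nu}$, whereas you compare coefficients term by term, which is equivalent.
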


\begin{proof}
We remark that this operator was also used in \cite{Azpeitia_2020}, which contains the proof of the above estimate. In fact, the proof is a simple application of the triangle inequality, therefore we present it here as well to keep the exposition self-contained. For any $a \in \ell^1_\nu$,

\begin{equation*}
\begin{split}
\|Sa\|_{\ell^1_\nu} &= 2 \sum_{n=1}^\infty |a_{n+1} - a_{n-1}| \nu^n \leq \frac{2}{\nu} \sum_{n=1}^\infty |a_{n+1}| \nu^{n+1} + 2\nu \sum_{n=1}^\infty |a_{n-1}| \nu^{n-1} = 
\\
&= \frac{1}{\nu} \left( \|a\|_{\ell^1_\nu} - |a_0| - 2\nu |a_1|  \right) + \nu \left( \|a\|_{\ell^1_\nu} + |a_0| \right) \leq \left( \nu + \frac{1}{\nu} \right) \|a\|_{\ell^1_\nu} + \left( \nu - \frac{1}{\nu} \right) |a_0|.
\end{split}
\end{equation*}

\n Bounding $|a_0| \leq \|a\|_{\ell^1_\nu}$, the last expression simplifies to $2\nu \|a\|_{\ell^1_\nu}$.
 
\end{proof}

\begin{lemma} \label{LEM f}
Let $\bm{f}: X \to X$ be as in \eqref{f}. 

\begin{enumerate}
\item[(i)] The Fr{\'e}chet derivative $\bm{f}'$ is Lipschitz continuous: for any $\bm{p}, \bm{h} \in X$

\begin{equation} \label{f' Lip}
\|\bm{f}'(\bm{p}+\bm{h}) - \bm{f}'(\bm{p})\|_{B(X)} \leq \|\bm{h}\|_X
\end{equation}  

\item[(ii)] Let $N\geq 1$ and assume $\bm{p}$ satisfies (H1), then $\displaystyle \bm{\pi}^{N+1} \bm{f}'(\bm{p}) \bm{\pi}_{2N+1} = 0$

\item[(iii)] Let $\bm{p}=(p_1,...,p_8) \in X$, then $\displaystyle \|\bm{f}'(\bm{p})\|_{B(X)} \leq C$, where

\begin{equation} \label{C}
C = \frac{1}{4} \max\left\{1; \ \|p_5 \pm \omega^2 e_0\|_{\ell^1_\nu} + \sum_{j=1,2,6} \|p_j\|_{\ell^1_\nu}; \ 4\omega^2 + 2 \sum_{j=1,2} \|p_j\|_{\ell^1_\nu} \right\}
\end{equation}

\n and the sequence $e_0=(e_{0,n})_{n\geq 0}$ is defined by $e_{0,0}=1$ and $e_{0,n}=0$ for $n\geq 1$.

\end{enumerate}

\end{lemma}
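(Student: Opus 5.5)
We compute $\bm{f}'$ explicitly and then estimate it componentwise. Each $f_j$ is the sequence of Chebyshev coefficients of the quadratic expression $\ff_j(\bm{\xx})$ in \eqref{f}. Write $x*y$ for the Chebyshev product, i.e.\ the coefficient sequence of $\xx\yy$, which is bilinear and symmetric. Then
\begin{equation*}
f_3(\bm{x}) = -\tfrac14\left(x_1*x_5 + x_2*x_6 + \omega^2 x_1\right),
\end{equation*}
and similarly for the other components. Differentiating gives, for instance,
\begin{equation*}
[\bm{f}'(\bm{p})\bm{w}]_3 = -\tfrac14\left(w_1*(p_5+\omega^2 e_0) + p_1*w_5 + w_2*p_6 + p_2*w_6\right),
\end{equation*}
using $e_0*w = w$. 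The analogous formulas hold for components $4,7,8$, while components $1,2,5,6$ are $\tfrac14$ times a single coordinate $w_k$. All claims follow from this formula, the triangle inequality, and Lemma~\ref{LEM Cheb product}.

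For (i), the map $\bm{h}\mapsto \bm{f}'(\bm{p}+\bm{h})-\bm{f}'(\bm{p})$ is linear in $\bm{h}$, and the $\omega^2$ terms cancel. For example,
\begin{equation*}
[(\bm{f}'(\bm{p}+\bm{h})-\bm{f}'(\bm{p}))\bm{w}]_3 = -\tfrac14\left(w_1*h_5+h_1*w_5+w_2*h_6+h_2*w_6\right).
\end{equation*}
By Lemma~\ref{LEM Cheb product}(i), its $\ell^1_\nu$ norm is at most $\tfrac14\cdot 4\|\bm{h}\|_X\|\bm{w}\|_X$. Components $4$ and $8$ are treated the same way: component $8$ is $-\tfrac12(w_1*h_2+h_1*w_2)$, which gives a bound $\tfrac14\cdot4$. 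Component $7$ is $-\tfrac14(2h_1*w_1-2h_2*w_2)$, with the same bound. The linear components vanish. Taking the maximum over $j$ gives \eqref{f' Lip}.

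For (iii), apply the same estimates to the explicit formula for $\bm{f}'(\bm{p})$. Components $1,2,5,6$ give $\tfrac14$. Component $3$ gives
\begin{equation*}
\tfrac14\left(\|p_5+\omega^2 e_0\|_{\ell^1_\nu}+\|p_1\|_{\ell^1_\nu}+\|p_6\|_{\ell^1_\nu}+\|p_2\|_{\ell^1_\nu}\right)\|\bm{w}\|_X .
\end{equation*}
Component $4$ is $\tfrac14(w_2*(p_5-\omega^2e_0) + p_2*w_5 - w_1*p_6 - p_1*w_6)$ and gives the same bound with $-\omega^2$, which accounts for the $\pm$ in \eqref{C}. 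Components $7$ and $8$ give $\tfrac14(2\|p_1\|_{\ell^1_\nu}+2\|p_2\|_{\ell^1_\nu}+4\omega^2)$. Taking the maximum yields $C$.

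For (ii), we must show that $[\bm{f}'(\bm{p})\bm{w}]_{j,n}=0$ for $n\le N+1$ whenever $\bm{w}=\bm{\pi}_{2N+1}\bm{w}$, i.e.\ $w_{k,m}=0$ for $m\le 2N+1$. The linear terms ($w_k$ and $\omega^2 w_k$) inherit this vanishing directly, since $N+1\le 2N+1$. The quadratic terms have the form $p_i*w_k$ with $p_{i,n}=0$ for $n>N$, by (H1). Lemma~\ref{LEM Cheb product}(ii) as stated would need $p_{i,n}=0$ for $n\ge N'$ and $w_{k,m}=0$ for $m\le 2N'-1$, and would give vanishing only up to $N'$. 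Taking $N'=N+1$ fits exactly: $p_{i,n}=0$ for $n\ge N+1$, and $w_{k,m}=0$ for $m\le 2N+1=2N'-1$. So the product vanishes for indices $\le N+1$. This index bookkeeping is the only delicate point; the rest is routine.
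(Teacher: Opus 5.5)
Your proposal is correct and takes the same approach as the paper. You compute $\bm{f}'(\bm{p})\bm{w}$ explicitly through Chebyshev products and use the Banach algebra bound of Lemma~\ref{LEM Cheb product}(i) for parts (i) and (iii); for part (ii) you use Lemma~\ref{LEM Cheb product}(ii) with $N+1$ in place of $N$. The only difference is that you write out every component, while the paper works out $j=8$ in detail and calls the others analogous.
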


\begin{remark}
\normalfont In parts $(i)$ and $(iii)$, the point $\bm{p}$ is arbitrary and does not need to satisfy the assumption (H1). Further, it does not appear on the right-hand side of the Lipschitz estimate \eqref{f' Lip}, since the nonlinearities are quadratic (cf. \eqref{f}). This Lipschitz estimate is used in part $(ii)$ of Lemma~\ref{LEM Y0 and Z_2}, in the $Z_2$ bound. Parts $(ii)$ and $(iii)$ of the above lemma are used in Lemma~\ref{LEM Z_1}, in the $Z_1$ bound.
\end{remark}

\begin{proof}

Let $\bm{p}=(p_1,...,p_8) \in X$, and with our notational convention from Section~\ref{SECT Cheb exp}, let $\bm{\pp} = (\pp_1,...,\pp_8)$, where $\pp_j(t)$ denotes the function whose Chebyshev coefficients are given by the sequence $p_j$. Similarly, we use $\bm{\hh}$ and $\bm{\ww}$ to denote the 8-tuples of functions whose corresponding sequences of Chebyshev coefficients are given by $\bm{h}, \bm{w} \in X$, respectively. Finally, recall that $\bm{f}=(f_1,...,f_8)$, where $f_j : X \to \ell^1_\nu$, and that $f_j(\bm{p})$ denotes the sequence of Chebyshev coefficients of the function $\ff_j(\bm{\pp})$ defined by \eqref{f}. Let us now compute the Fr{\'e}chet derivative of $\bm{f}$. First, from \eqref{f} we have $\ff_1(\bm{\xx}) = \frac{1}{4} \xx_3$, and therefore $f_1(\bm{x}) = \frac{1}{4} x_3$. Similarly, $f_2$, $f_5$ and $f_6$ are also linear in $\bm{x}$, and consequently their Fr{\'e}chet derivatives are equal to themselves and do not depend on the point $\bm{p}$, namely

\begin{equation*}
f_1'(\bm{p}) \bm{w} = \frac{1}{4} w_3, \qquad f_2'(\bm{p}) \bm{w} = \frac{1}{4} w_4, \qquad
f_5'(\bm{p}) \bm{w} = \frac{1}{4} w_7, \qquad f_6'(\bm{p}) \bm{w} = \frac{1}{4} w_8.
\end{equation*}

\n The nonlinearities in \eqref{f} appear in the components $j=3,4,7,8$. The Fr{\'e}chet derivatives of these components are given by the following formulas: 

\begin{equation*}
\begin{split}
f_j'(\bm{p})\bm{w} \ &\text{is the sequence of Chebyshev coefficients of}
\\
&\text{the function }
\frac{1}{4}
\begin{cases}
-(\pp_5 + \omega^2) \ww_1 - \pp_1 \ww_5 - \pp_6 \ww_2 - \pp_2 \ww_6, \qquad &j=3
\\
(\pp_5 - \omega^2) \ww_2 + \pp_2 \ww_5 - \pp_6 \ww_1 - \pp_1 \ww_6, &j=4
\\
- 4\omega^2 \ww_5 + 2 \left(\pp_2 \ww_2 - \pp_1 \ww_1 \right), &j=7
\\
- 4\omega^2 \ww_6  - 2 ( \pp_1 \ww_2 + \pp_2 \ww_1),  &j=8
\end{cases}
\end{split}
\end{equation*}

\n Let us show this result for $j=8$, the argument is completely analogous for the other components. By definition $\ff_8 (\bm{\xx}) = - \omega^2 \xx_6 - \frac{1}{2} \xx_1 \xx_2 $, therefore

\begin{equation*}
\ff_8(\bm{\pp}+\bm{\ww}) - \ff_8(\bm{\pp})  = - \omega^2 \ww_6  - \frac{1}{2} (\pp_1 \ww_2 + \pp_2 \ww_1) - \frac{1}{2} \ww_1 \ww_2.
\end{equation*}

\n Dropping the last term, we obtain the linear part with respect to $\bm{\ww}$, which corresponds exactly to $f_8'(\bm{p})\bm{w}$.

\vspace{.1in}

$(i)$ We need to show that for all $j=1,...,8$ and any $\bm{w} \in X$  

\begin{equation*}
\|f_j'(\bm{p}+\bm{h})\bm{w} - f_j'(\bm{p})\bm{w}\|_{\ell^1_\nu} \leq \|\bm{h}\|_X \|\bm{w}\|_X.
\end{equation*}

\n The above estimate is trivial for $j=1,2,5,6$, since the left-hand side is zero in those cases. Let us prove the result for $j=8$ (the argument is analogous for the remaining values of $j$). Note that $f_8'(\bm{p}+\bm{h})\bm{w} - f_8'(\bm{p})\bm{w}$ is equal to the sequence of Chebyshev coefficients of the function

\begin{equation*}
-\frac{1}{2} \left( \hh_1 \ww_2 + \hh_2 \ww_1 \right).
\end{equation*}

\n Using Lemma~\ref{LEM Cheb product} we can bound the $\ell^1_\nu$-norm of the Chebyshev sequence of the product of two functions by the product of the norms of the individual Chebyshev sequences, so that

\begin{equation*}
\|f_8'(\bm{p}+\bm{h})\bm{w} - f_8'(\bm{p})\bm{w}\|_{\ell^1_\nu} \leq \frac{1}{2} \left( \|h_1\|_{\ell^1_\nu} \|w_2\|_{\ell^1_\nu} + \|h_2\|_{\ell^1_\nu} \|w_1\|_{\ell^1_\nu} \right) \leq \|\bm{h}\|_X \|\bm{w}\|_X.
\end{equation*}

\vspace{.1in}

$(ii)$ Let $\bm{w} \in X$ be such that $\bm{w} = \bm{\pi}_{2N+1} \bm{w}$, in other words $\left[ w_j \right]_n = 0$ for $n=0,...,2N+1$ and all $j=1,...,8$. Our goal is to prove that for all $j$,

\begin{equation} \label{0}
\left[f_j'(\bm{p}) \bm{w}\right]_n = 0, \qquad \qquad n=0,...,N+1.
\end{equation}

\n Recall that in this part we also assume $\bm{\pi}_N \bm{p} = 0$, i.e. $[p_j]_n = 0$ for $n\geq N+1$. Our previous calculations show that $f_j'(\bm{p}) \bm{w}$ consists of two types of terms:

\begin{enumerate}
\item[$\bullet$] a linear term -- $w_l$ for some index $l$. For this term \eqref{0} holds trivially.

\item[$\bullet$] a quadratic term -- the sequence of Chebyshev coefficients of the product $\pp_l \ww_s$ for some indices $l, s$. For this term, \eqref{0} follows directly from part $(ii)$ of Lemma~\ref{LEM Cheb product} (applied with $N+1$ in place of $N$).
\end{enumerate}

$(iii)$ The estimate follows directly from the formulas for $f_j'(\bm{p})\bm{w}$ obtained above and from Lemma~\ref{LEM Cheb product}, which bounds the Chebyshev coefficients of the product of two functions. We simply observe that for $j=3,4$, the function $\pp_5 \pm \omega^2$ has Chebyshev coefficients given by the sequence $p_5 \pm \omega^2 e_0$.
\end{proof}

\subsubsection{The $Y_0$ and $Z_2$ bounds}

\n In this section, we derive bounds for the quantities $(i)$ and $(iii)$ in Theorem~\ref{THM NK2} and obtain expressions for the constants $Y_0$ and $Z_2$.

\begin{lemma} \label{LEM Y0 and Z_2}
Assume (H1) and (H2). Then

\begin{enumerate}
\item[(i)] $\displaystyle \|\bm{A} \bm{F} (\bm{p})\|_X \leq Y_0$, where

\begin{equation*}
Y_0 = \|\bm{A}^N \bm{F} (\bm{p})\|_X + \frac{1}{2(N+1)} \|\left(\bm{\pi}^{2N+1} - \bm{\pi}^{N}\right) \bm{S} \bm{f}(\bm{p})\|_X.
\end{equation*}

\item[(ii)] $\displaystyle \|\bm{A} \left[ \bm{F}'(\bm{p}+\bm{h}) - \bm{F}'(\bm{p} +\bm{\tilde h}) \right]\|_{B(X)} \leq Z_2 \|\bm{h}-\bm{\tilde h}\|_X$ for all $\bm{h},\bm{\tilde h} \in X$, where

\begin{equation*}
Z_2 = 2\nu \left( \|\bm{A}^N\|_{B(X)} + \frac{1}{2(N+1)} \right).
\end{equation*}

\end{enumerate}

\end{lemma}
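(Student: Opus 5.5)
The plan is to split $\bm{A} = \bm{A}^N + \bm{A}_N$ and treat each piece separately, relying on the support properties from (H1), (H2) and Lemma~\ref{LEM Cheb product}(ii).

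\emph{Part (i).} By the triangle inequality,
\[
\|\bm{A}\bm{F}(\bm{p})\|_X \le \|\bm{A}^N\bm{F}(\bm{p})\|_X + \|\bm{A}_N\bm{F}(\bm{p})\|_X .
\]
The first term is kept as it stands; it is a finite quantity computable from $\bm{p}$. For the second term, $\bm{A}_N = \bm{A}_N\bm{\pi}_N$, and for $n>N\ge1$ we have $[\bm{F}(\bm{p})]_n = [\bm{M}\bm{p} + \bm{S}\bm{f}(\bm{p})]_n$. By (H1), $\bm{\pi}_N\bm{M}\bm{p}=0$.

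It remains to show that $\bm{S}\bm{f}(\bm{p})$ has no coefficients beyond index $2N+1$. Each $\ff_j(\bm{\pp})$ is at most quadratic in polynomials of degree $\le N$, so it is a polynomial of degree $\le 2N$. Hence $[f_j(\bm{p})]_n=0$ for $n>2N$, and therefore $[\bm{S}\bm{f}(\bm{p})]_n = -f_{n-1}+f_{n+1} = 0$ for $n>2N+1$. This gives
\[
\bm{\pi}_N\bm{F}(\bm{p}) = (\bm{\pi}^{2N+1}-\bm{\pi}^N)\bm{S}\bm{f}(\bm{p}).
\]
Applying the bound \eqref{A_N bound} on $\|\bm{A}_N\|_{B(X)}$ then yields the stated $Y_0$.

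\emph{Part (ii).} From \eqref{F M S vector}, the difference $\bm{F}'(\bm{p}+\bm{h})-\bm{F}'(\bm{p}+\bm{\tilde h})$ is obtained as follows:
\begin{itemize}
\item In the $n=0$ row, the $\bm{B}$ terms cancel, since $\bm{B}$ is linear and independent of the point.
\item In the rows $n\ge1$, the $\bm{M}$ terms cancel as well.
\end{itemize}
What remains is $\bm{S}\,[\bm{f}'(\bm{p}+\bm{h})-\bm{f}'(\bm{p}+\bm{\tilde h})]$, with zero in row $0$; this matches the convention $[\bm{S}a]_0=0$.

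This operator maps $X\to X$, so
\[
\|\bm{A}[\cdots]\|_{B(X)} \le \big(\|\bm{A}^N\|_{B(X)}+\|\bm{A}_N\|_{B(X)}\big)\,\|\bm{S}\|_{B(X)}\,\|\bm{f}'(\bm{p}+\bm{h})-\bm{f}'(\bm{p}+\bm{\tilde h})\|_{B(X)} .
\]
Now apply three earlier estimates:
\begin{itemize}
\item Lemma~\ref{LEM S bound} gives $\|\bm{S}\|_{B(X)}\le 2\nu$.
\item Lemma~\ref{LEM f}(i), applied at base point $\bm{p}+\bm{\tilde h}$ with increment $\bm{h}-\bm{\tilde h}$, gives $\|\bm{f}'(\bm{p}+\bm{h})-\bm{f}'(\bm{p}+\bm{\tilde h})\|_{B(X)}\le\|\bm{h}-\bm{\tilde h}\|_X$.
\item \eqref{A_N bound} gives $\|\bm{A}_N\|_{B(X)}\le \frac{1}{2(N+1)}$.
\end{itemize}
Together these give $Z_2$ as stated.

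The only delicate point is the degree-counting in part (i), which guarantees that the tail of $\bm{F}(\bm{p})$ is finitely supported. This step must also account for the $\omega^2$-linear terms, which are harmless because they have degree $\le N$. Everything else is routine triangle-inequality bookkeeping.
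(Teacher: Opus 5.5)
Your proposal is correct and follows the paper's proof essentially step for step. In (i) you split $\bm{A}=\bm{A}^N+\bm{A}_N$, discard $\bm{M}\bm{p}$ on the tail via (H1), and use the degree count (support of $\bm{f}(\bm{p})$ in indices $\le 2N$, hence of $\bm{S}\bm{f}(\bm{p})$ in indices $\le 2N+1$) to replace $\bm{\pi}_N$ by $\bm{\pi}^{2N+1}-\bm{\pi}^N$; in (ii) you note that $\bm{B}$ and $\bm{M}$ cancel, leaving $\bm{S}[\bm{f}'(\bm{p}+\bm{h})-\bm{f}'(\bm{p}+\bm{\tilde h})]$, which you bound with Lemma~\ref{LEM S bound}, Lemma~\ref{LEM f}(i) (applied at base point $\bm{p}+\bm{\tilde h}$, a detail the paper leaves implicit) and \eqref{A_N bound}.
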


\begin{proof}

$(i)$ By definition $\bm{A} \bm{F}(\bm{p}) = \bm{A}^N \bm{F}(\bm{p}) + \bm{A}_N \bm{F}(\bm{p})$. In view of \eqref{F M S vector}, 

\begin{equation*}
\bm{A}_N \bm{F}(\bm{p}) = \bm{A}_N \bm{M}\bm{p} + \bm{A}_N \bm{S} \bm{f}(\bm{p}) = \bm{A}_N \bm{\pi}_N \bm{S} \bm{f}(\bm{p}),
\end{equation*}

\n where in the last equation we used the fact that $\bm{p}$ is a finite sequence and that all terms after index $N$ are zero. Combining this with \eqref{A_N bound}, we obtain the bound

\begin{equation*}
\|\bm{A} \bm{F}(\bm{p})\|_X \leq \|\bm{A}^N \bm{F}(\bm{p})\|_X + \frac{1}{2(N+1)} \|\bm{\pi}_N \bm{S} \bm{f}(\bm{p})\|_X.
\end{equation*}

\n It remains to use the equation $\bm{\pi}_N \bm{S} \bm{f}(\bm{p}) = \left(\bm{\pi}^{2N+1} - \bm{\pi}^N \right) \bm{S} \bm{f}(\bm{p})$, which holds because the nonlinearities in $\bm{f}$ are quadratic and $\bm{p}$ is a finite sequence whose nonzero elements are at indices $n\leq N$. Indeed, all nonzero elements of $\bm{f}(\bm{p})$ occur at indices $n \leq 2N$, while those of $\bm{S} \bm{f}(\bm{p})$ occur at indices $n \leq 2N + 1$, due to the index shift in the operator $S$ \eqref{M S}.

\vspace{.1in}

$(ii)$ In general, $\bm{F}'(\bm{p}) : X \to Y$. However, the difference $\bm{F}'(\bm{p}+\bm{h}) - \bm{F}'(\bm{p}+\bm{\tilde h})$ is a bounded operator from $X$ into itself (recall that $X$ is a smaller space than $Y$). This is because, when taking this difference, the operator $\bm{M}$ whose range lies in $Y$, cancels. Indeed, in view of \eqref{F M S vector}

\begin{equation*}
\bm{F}'(\bm{p}+\bm{h}) - \bm{F}'(\bm{p}+\bm{\tilde h}) = \bm{S} \left[ \bm{f}'(\bm{p}+\bm{h}) - \bm{f}'(\bm{p}+\bm{\tilde h}) \right]
\end{equation*}

\n holds for indices $n \geq 1$. For $n=0$, $\bm{F}$ is given by the boundary conditions $\bm{B}$, which define a linear operator. Hence, the above difference of derivatives is zero. Moreover, by definition, $\bm{S}$ is equal to zero at $n=0$, so the above equality also holds for $n=0$. It is now clear that

\begin{equation*}
\|\bm{A} \left[ \bm{F}'(\bm{p}+\bm{h}) - \bm{F}'(\bm{p}+\bm{\tilde h}) \right]\|_{B(X)} \leq \|\bm{A} \|_{B(X)} \|\bm{S} \|_{B(X)} \|\bm{f}'(\bm{p}+\bm{h}) - \bm{f}'(\bm{p}+\bm{\tilde h}) \|_{B(X)}.
\end{equation*}

\n The result then follows by combining this with Lemma \ref{LEM S bound}, Lemma~\ref{LEM f} and  the estimate \eqref{A_N bound}.

\end{proof}

\subsubsection{The $Z_1$ bound}

\begin{lemma} \label{LEM Z_1}
Assume (H1) and (H2), then $\|I - \bm{A} \bm{F}'(\bm{p}) \|_{B(X)} \leq Z_1 = Z_{1,1} + Z_{1,2} + Z_{1,3}$, where 

\begin{equation*}
Z_{1,1} =  \| \bm{\pi}^N \left( I - \bm{A} \bm{F}'(\bm{p}) \right) \bm{\pi}^{2N+1}\|_{B(X)}, \qquad
Z_{1,2} = \frac{2}{\nu^{2N+2}} \|\bm{A}^N\|_{B(X)},
\qquad
Z_{1,3} = \frac{\nu C}{N+1},
\end{equation*}
with $C$ given by \eqref{C}.

\end{lemma}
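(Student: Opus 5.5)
The plan is to split $I - \bm{A}\bm{F}'(\bm{p})$ with the projections: on the left into $\bm{\pi}^N$ and $\bm{\pi}_N$ parts, and on the right (for the $\bm{\pi}^N$ part only) into $\bm{\pi}^{2N+1}$ and $\bm{\pi}_{2N+1}$ parts. This gives
\begin{equation*}
I - \bm{A}\bm{F}'(\bm{p}) = \bm{\pi}^N\left(I - \bm{A}\bm{F}'(\bm{p})\right)\bm{\pi}^{2N+1} + \bm{\pi}^N\left(I - \bm{A}\bm{F}'(\bm{p})\right)\bm{\pi}_{2N+1} + \bm{\pi}_N\left(I - \bm{A}\bm{F}'(\bm{p})\right).
\end{equation*}
By the triangle inequality it suffices to bound the three pieces by $Z_{1,1}$, $Z_{1,2}$ and $Z_{1,3}$ respectively. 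The first piece is exactly $Z_{1,1}$ by definition. It is a finite matrix once $\bm{p}$ and $\bm{A}^N$ are fixed, so it is evaluated numerically.

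For the tail piece $\bm{\pi}_N(I - \bm{A}\bm{F}'(\bm{p}))$, I use $\bm{\pi}_N\bm{A}^N = 0$ from \eqref{A^N pi}, so only $\bm{A}_N\bm{F}'(\bm{p})$ contributes. Since $\bm{A}_N$ vanishes at indices $n\le N$ (in particular at $n=0$), the second line of \eqref{F'p} applies. Then \eqref{BM=Id} gives
\begin{equation*}
\bm{A}_N\bm{F}'(\bm{p}) = \bm{\pi}_N + \bm{A}_N\bm{S}\bm{f}'(\bm{p}).
\end{equation*}
Hence $\bm{\pi}_N(I - \bm{A}\bm{F}'(\bm{p})) = -\bm{A}_N\bm{S}\bm{f}'(\bm{p})$. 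Combining \eqref{A_N bound}, Lemma~\ref{LEM S bound} and part (iii) of Lemma~\ref{LEM f} bounds its norm by $\frac{1}{2(N+1)}\cdot 2\nu\cdot C = Z_{1,3}$.

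For the middle piece, note $\bm{\pi}^N\bm{\pi}_{2N+1}=0$. Also $\bm{A}^N = \bm{A}^N\bm{\pi}^N$. So this piece equals
\begin{equation*}
-\bm{A}^N\,\bm{\pi}^N\bm{F}'(\bm{p})\bm{\pi}_{2N+1},
\end{equation*}
and it remains to show $\|\bm{\pi}^N\bm{F}'(\bm{p})\bm{\pi}_{2N+1}\|_{B(X)}\le 2\nu^{-(2N+2)}$. Take $\bm{w}$ with $\bm{w}=\bm{\pi}_{2N+1}\bm{w}$. At indices $1\le n\le N$:
\begin{itemize}
\item $[\bm{M}\bm{w}]_n = 0$, since $\bm{w}$ vanishes there.
\item $[\bm{S}\bm{f}'(\bm{p})\bm{w}]_n$ involves only indices $n\pm1\le N+1$ of $\bm{f}'(\bm{p})\bm{w}$, and these vanish by part (ii) of Lemma~\ref{LEM f}. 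This is where (H1) enters.
\end{itemize}
Thus only the $n=0$ row, given by the boundary functionals $B_j$, survives. For each $j$,
\begin{equation*}
|B_j\bm{w}| \le 2\sum_{n\ge 2N+2}|w_{l,n}| \le \nu^{-(2N+2)}\, 2\sum_{n}|w_{l,n}|\nu^n \le \nu^{-(2N+2)}\|\bm{w}\|_X,
\end{equation*}
where $l$ is the component entering $B_j$. Since this row sits in the $n=0$ slot, its $\ell^1_\nu$ norm is just $|B_j\bm{w}|$, which is within the stated factor $2$. Multiplying by $\|\bm{A}^N\|_{B(X)}$ gives $Z_{1,2}$.

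The main point requiring care is this middle piece: one must check that the interior rows $1\le n\le N$ really vanish. This needs the exact index bookkeeping of part (ii) of Lemma~\ref{LEM f} together with the one-step shift in $\bm{S}$ (hence the threshold $2N+1$). The remaining steps are direct applications of the earlier estimates.
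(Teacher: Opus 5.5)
Your proposal is correct and follows essentially the same route as the paper. The paper also isolates the tail piece $-\bm{A}_N\bm{S}\bm{f}'(\bm{p})$ to obtain $Z_{1,3}$. It then splits $\bm{\pi}^N(I-\bm{A}\bm{F}'(\bm{p}))$ along $\bm{\pi}^{2N+1}$ and $\bm{\pi}_{2N+1}$, identifies the second part as $-\bm{A}^N\bm{\pi}^N\bm{F}'(\bm{p})\bm{\pi}_{2N+1}$, and uses part (ii) of Lemma~\ref{LEM f} with the shift in $\bm{S}$ to show that only the boundary row survives. Your bound $\nu^{-(2N+2)}\|\bm{w}\|_X$ on that row is a factor $2$ sharper than the paper's, because you use the factor $2$ in the $\ell^1_\nu$ weights for $n\ge 1$; this is harmless, as you note.
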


\begin{proof}

Using the definition of the operator $\bm{A}$ and the formula \eqref{F'p}, we may write

\begin{equation*}
\bm{A}  \bm{F}'(\bm{p}) = \bm{A}^N  \bm{F}'(\bm{p}) + \bm{A}_N  \bm{F}'(\bm{p}) = \bm{A}^N  \bm{F}'(\bm{p}) + \bm{A}_N \bm{M} + \bm{A}_N \bm{S} \bm{f}'(\bm{p}).
\end{equation*}

\n Next $I = \bm{\pi}^N I + \bm{\pi}_N I$, and taking the difference of these two identities, and using that $\bm{A}_N \bm{M} = \bm{\pi}_N I$ (cf. \eqref{BM=Id}), we obtain

\begin{equation*}
\begin{split}
I - \bm{A}  \bm{F}'(\bm{p}) &= \bm{\pi}^N \left( I - \bm{A} \bm{F}'(\bm{p}) \right) - \bm{A}_N \bm{S} \bm{f}'(\bm{p}) =  
\\[.1in]
&= \bm{\pi}^N \left( I - \bm{A} \bm{F}'(\bm{p}) \right) \bm{\pi}^{2N+1} - \bm{A}^N \bm{R} - \bm{A}_N \bm{S} \bm{f}'(\bm{p}),
\end{split}
\end{equation*}

\n In the last step we used that $\bm{\pi}^N I \bm{\pi}_{2N+1} = 0$ and have set 

\begin{equation*}
\bm{R} = \bm{\pi}^N \bm{F}'(\bm{p}) \bm{\pi}_{2N+1}.
\end{equation*}

\n $\bm{F}'(\bm{p}) : X \to Y$ is a bounded operator, and it is easy to see that the projection $\bm{\pi}^N : Y \to X$ is also bounded, since the infinite sums reduce to finite ones. Consequently, $\bm{R}$ is a bounded operator on $X$, and therefore, using Lemma~\ref{LEM S bound}, part $(iii)$ of Lemma~\ref{LEM f} and the estimate \eqref{A_N bound}, we immediately obtain

\begin{equation*}
\|I - \bm{A}  \bm{F}'(\bm{p})\|_{B(X)} \leq Z_{1,1} + \|\bm{A}^N\|_{B(X)} \|\bm{R}\|_{B(X)} + \frac{\nu C}{N+1}
\end{equation*}

\n To conclude the proof we need to show that $\displaystyle \|\bm{R}\|_{B(X)} \leq 2 /  \nu^{2N+2}$. To that end we will show that

\begin{equation} \label{1}
\bm{\pi}^N \left[ \bm{M} + \bm{S} \bm{f}'(\bm{p}) \right] \bm{\pi}_{2N+1} = 0,
\end{equation}

\n which in view of \eqref{F'p} implies that

\begin{equation*}
\bm{R}  =
\begin{cases}
\bm{B} \bm{\pi}_{2N+1}, \qquad \qquad &n=0
\\
0, & n\geq 1.
\end{cases}
\end{equation*}

\n Recall that $\bm{B} : X \to \RR^8$ and for a given element of $X$ it returns a single 8-tuple of numbers given by \eqref{B 1-4} and \eqref{B 5-8}. By definition for any $\bm{w} = (w_{1}, ..., w_8) \in X = \left( \ell^1_\nu \right)^8$

\begin{equation*}
B_1 \bm{\pi}_{2N+1}  \bm{w} = \sum_{n=2N+2}^\infty \beta_n w_{2, n},
\end{equation*}

\n where the sequence $\beta_n$ is defined by \eqref{alpha beta}, and $w_{2,n}$ denotes the $n$-th term of the sequence $w_2$. Since $|\beta_n| \leq 2$ and $\nu \geq 1$ we may estimate

\begin{equation*}
\left| B_1 \bm{\pi}_{2N+1}  \bm{w} \right| = \left| \sum_{n=2N+2}^\infty \frac{\beta_n}{\nu^n} w_{2, n} \nu^n \right| \leq \frac{2}{\nu^{2N+2}} \|w_2\|_{\ell^1_\nu} \leq \frac{2}{\nu^{2N+2}} \|\bm{w}\|_X.
\end{equation*}

\n Obviously, the same estimate also holds with $B_2, ..., B_8$ in place of $B_1$, which then implies the desired estimate. To conclude the proof, it thus remains to verify \eqref{1}. The definition \eqref{M S} immediately gives that $\bm{\pi}^N \bm{M} \bm{\pi}_{2N+1} = 0$. On the other hand, part $(ii)$ of Lemma~\ref{LEM f} implies that

\begin{equation*}
\bm{f}'(\bm{p}) \bm{\pi}_{2N+1} = \bm{\pi}_{N+1} \bm{f}'(\bm{p}) \bm{\pi}_{2N+1} 
\end{equation*}

\n Consequently, \eqref{1} follows from the identity

\begin{equation} \label{2}
\bm{\pi}^N \bm{S} \bm{\pi}_{N+1} = 0,   
\end{equation}

\n which in turn follows directly from the definition of the operator $\bm{S}$ (see \eqref{M S}).
\end{proof}

\subsubsection{Existence of a solution for $\omega=0$} \label{SECT omega=0}

We now conclude the application of Theorem~\ref{THM NK2} to the operator $\bm{F}$ \eqref{F_j,n} for $\omega=0$, using the constants $Y_0$, $Z_1$ and $Z_2$ obtained in the previous sections, and we verify the hypothesis that for some $r>0$,

\begin{equation*}
Q(r) = \frac{1}{2} Z_2 r^2 - (1-Z_1) r + Y_0 < 0.
\end{equation*}

\n Theorem~\ref{THM NK2} then guarantees the existence of a point $\bm{x} \in B_r(\bm{p})$ such that $\bm{F}(\bm{x}) = 0$. As discussed in Section~\ref{SECT p and A}, our starting point is to choose $\bm{p} \in X$ such that $\bm{F}(\bm{p}) \approx 0$. We do this by considering, equivalently, the ODE system \eqref{1st order system}. We use the shooting method to obtain an unrefined or rough approximate solution $\bm{\pp}(t)$, followed by an interpolation process to obtain the approximate Chebyshev coefficients $\bm{p}$. We then apply Newton's method to refine the approximation. Thus, we obtain a finite (8-tuple of sequences) $\bm{p}$ of size $N$, i.e., $\bm{\pi}_N \bm{p} = 0$, and in particular, the assumption (H1) of Section~\ref{SECT p and A} holds. The truncation size $N$ is determined by our numerical approximation, and we take $N=70$. The code and numerical tools needed to compute the necessary quantities are available on GitHub at \cite{TransmissionEigenvalues.jl}.

We next take $\bm{A}^N$ to be a numerical inverse of the finite part $\bm{\pi}^N \bm{F}'(\bm{p}) \bm{\pi}^N$, which in particular satisfies the assumption (H2) of Section~\ref{SECT p and A}. Choosing $\nu = 1.08$, we estimate

\begin{equation*}
\|\bm{A}^N\|_{B(X)} \leq 27.08.
\end{equation*}

\n Using Lemmas~\ref{LEM Y0 and Z_2} and \ref{LEM Z_1} we further estimate

\begin{equation*}
Y_0 \leq 2.5 \cdot 10^{-12}, \qquad Z_2 \leq 58.5 \qquad \text{and} \qquad Z_1 \leq 0.965.
\end{equation*}

\n Taking $r = 10^{-10}$ we check that $Q(r) < -0.9 \cdot 10^{-12} < 0$. Finally, we also check that 

\begin{equation*}
\|\bm{p}\|_X \geq 112.33 \geq r.
\end{equation*}

\n This guarantees that the origin lies outside of the ball $B_r(\bm{p})$, and the obtained solution $\bm{x}$ is nontrivial. For our implementation, we used the Julia programming language \cite{Julia-2017} and the RadiiPolynomial.jl \cite{RadiiPolynomial.jl} package, which allows for easy manipulation of sequences. We also use the interval arithmetic package IntervalArithmetic.jl \cite{IntervalArithmetic.jl} to rigorously evaluate mathematical expressions.

Thus, we conclude that the system \eqref{TE u reduced} for $\omega=0$ has a nontrivial solution satisfying the symmetry \eqref{sym}.

\subsection{Existence of solutions for $\omega>0$} \label{SECT omega>0}

Our goal now is to prove the existence of a branch of (nontrivial) solutions to the equation $\bm{F}_\omega(\bm{x})=0$, or equivalently to \eqref{TE u reduced} with \eqref{sym}, for $\omega \geq 0$. Note that, to emphasize the dependence of the operator on the parameter $\omega$, we use the notation $\bm{F}_\omega$. For methods to perform rigorous continuation with respect to a parameter, we refer to \cite{polynomial_chaos, cadiot_witham, marschal, breden:tel-05142432}. The approach is again based on a parameter-dependent version of the Newton-Kantorovich Theorem~\ref{THM NK2}, which we state below and whose proof is a precise analogue of that of Theorem~\ref{THM NK2}.

\begin{theorem} \label{THM NK3}
Let $X, Y$ be Banach spaces, $r_0 > 0$ and $b>a$ be given. For each $\omega \in [a, b]$ let $p_\omega \in X$, $F_\omega : X \to Y$ be Fr{\'e}chet differentiable in the ball $B_{r_0}(p_\omega)$ and let $A_\omega : Y \to X$ be an injective, bounded linear operator. Assume that $Y_0, Z_1$ and $Z_2$ are nonnegative constants such that 

\begin{enumerate}
\item[(i)] $\displaystyle \sup_{\omega \in [a, b]} \|A_\omega F_\omega (p_\omega)\|_X \leq Y_0$,
\item[(ii)] $\displaystyle \sup_{\omega \in [a, b]} \|I - A_\omega F_\omega'(p_\omega) \|_{B(X)} \leq Z_1$,
\item[(iii)] $\displaystyle \sup_{\omega \in [a, b]}\|A_\omega [ F_\omega'(p_\omega+h) - F_\omega'(p_\omega+\tilde h) ]\|_{B(X)} \leq Z_2 \|h - \tilde{h}\|_X$ for all $h, \tilde h \in B_{r_0}(0)$.
\end{enumerate}

\n Consider the quadratic

\begin{equation*}
Q(r) = \frac{1}{2} Z_2 r^2 - (1-Z_1) r + Y_0.
\end{equation*}

\n If there exists $r \in (0, r_0)$ such that $Q(r)<0$, then for every $\omega \in [a, b]$ there exists a unique $x_\omega \in B_r(p_\omega)$ such that $F_\omega(x_\omega) = 0$. 

\end{theorem}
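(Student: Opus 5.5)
The proof will repeat the argument of Theorem~\ref{THM NK2} for each fixed $\omega \in [a,b]$. The key point is that the constants $Y_0$, $Z_1$ and $Z_2$, and hence the quadratic $Q$ and the radius $r$, do not depend on $\omega$.

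Fix $\omega \in [a,b]$. Since $A_\omega$ is injective, it suffices to find $h$ with $A_\omega F_\omega(p_\omega+h)=0$. Expanding $F_\omega$ around $p_\omega$ as in \eqref{F expand}, with remainder $E_\omega(h)=\int_0^1 [F_\omega'(p_\omega+sh)-F_\omega'(p_\omega)]h\,ds$, the zero-finding problem becomes the fixed-point problem $h=T_\omega(h)$, where
\begin{equation*}
T_\omega(h) = -A_\omega F_\omega(p_\omega) + \left[I - A_\omega F_\omega'(p_\omega)\right]h - A_\omega E_\omega(h).
\end{equation*}
Hypothesis (iii) at the fixed $\omega$ gives, for $h,h_1,h_2 \in \overline{B_\rho}(0)$ with $\rho<r_0$,
\begin{equation*}
\|A_\omega E_\omega(h)\|_X \le \tfrac{1}{2}Z_2\|h\|_X^2, \qquad \|A_\omega[E_\omega(h_1)-E_\omega(h_2)]\|_X \le Z_2\rho\,\|h_1-h_2\|_X .
\end{equation*}
The second estimate follows by writing $E_\omega(h_1)-E_\omega(h_2)$ as an integral of differences of $F_\omega'$ over the segment, or equivalently via $F_\omega(p_\omega+h_1)-F_\omega(p_\omega+h_2)-F_\omega'(p_\omega)(h_1-h_2)=\int_0^1[F_\omega'(p_\omega+h_2+s(h_1-h_2))-F_\omega'(p_\omega)](h_1-h_2)\,ds$. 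Here $A_\omega$ is kept inside the norm so that only (iii) is used, exactly as the paper does.

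Combining these with (i) and (ii) yields, uniformly in $\omega$,
\begin{equation*}
\|T_\omega(h)\|_X \le \tfrac12 Z_2\rho^2 + Z_1\rho + Y_0, \qquad \|T_\omega(h_1)-T_\omega(h_2)\|_X \le (Z_1+Z_2\rho)\|h_1-h_2\|_X .
\end{equation*}
Since $Q(0)=Y_0\ge 0$ and $Q(r)<0$, $Q$ has a root $r_-\in[0,r)$. At $\rho=r_-$ the first bound is at most $r_-$, so $T_\omega$ maps $\overline{B_{r_-}}(0)$ into itself. Moreover, $Q'(r_-)\le 0$ because $r_-$ is the smaller root, so $Z_2r_- - (1-Z_1)<0$ strictly (if $Q$ had a double root it could not be negative). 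Hence $Z_1+Z_2r_-<1$ and $T_\omega$ is a contraction. The contraction mapping theorem then gives a fixed point $h_\omega$, and $x_\omega=p_\omega+h_\omega\in B_r(p_\omega)$ solves $F_\omega(x_\omega)=0$. In the degenerate case $Y_0=0$ we get $r_-=0$ and $x_\omega=p_\omega$.

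For uniqueness in the whole ball $B_r(p_\omega)$, observe that $Q(r)<0$ means $\tfrac12 Z_2 r^2+Z_1 r+Y_0<r$, so $T_\omega$ also maps $\overline{B_r}(0)$ into $B_r(0)$. The contraction constant there is $Z_1+Z_2r$, which need not be below $1$, so I would instead use the standard argument from \cite{Zeidler}. Suppose $h_\omega$ and $\tilde h\in B_r(0)$ are both fixed points. Bounding $T_\omega(\tilde h)-T_\omega(h_\omega)$ with the integral along the segment, centered at $0$, gives a Lipschitz factor of at most $Z_1+\tfrac12 Z_2(\|h_\omega\|+\|\tilde h\|)$. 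Then $Q(r)<0$, $\|h_\omega\|\le r_-$ and $Q'(r_-)<0$ show this factor is below $1$. The only delicate step is this uniqueness extension; everything else is the same computation as in Theorem~\ref{THM NK2}, done pointwise in $\omega$ with $\omega$-independent bounds.
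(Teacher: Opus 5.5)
Your proposal is correct and follows the same route as the paper, which only says the proof is a precise analogue of Theorem~\ref{THM NK2}. For each fixed $\omega$ you run the fixed-point argument for $T_\omega$ on $\overline{B_{r_-}}(0)$, where the smaller root $r_-$ of the $\omega$-independent quadratic $Q$ gives both the self-map and the contraction constant $Z_1+Z_2r_-<1$.

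The one place you go beyond the paper is uniqueness in all of $B_r(p_\omega)$, which the paper defers to \cite{Zeidler}. Your argument there works. The cleanest way to see that the factor $Z_1+\tfrac12 Z_2(r_-+r)$ is below $1$ is that it equals $1+\frac{Q(r)-Q(r_-)}{r-r_-}$, and $Q(r)<0=Q(r_-)$ with $r>r_-$.
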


We are going to apply the above theorem to our operator $\bm{F}_\omega$ between the spaces $X$ and $Y$ as defined in Sections~\ref{SECT Cheb exp} and \ref{SECT function spaces}. To avoid multiple subscripts, instead of $\bm{p}_\omega$ and $\bm{A}_\omega$, we will write $\bm{p}(\omega)$ and $\bm{A}(\omega)$, respectively. For the construction of these quantities, we will use Chebyshev expansions with respect to the variable $\omega \in [a, b]$. To that end, let us change the variable to $t \in [-1,1]$ using

\begin{equation*}
\omega = \frac{b - a}{2} t + \frac{b + a}{2} \qquad \Longleftrightarrow \qquad t = 2\frac{\omega - b}{b - a} + 1.
\end{equation*}

\n To construct $\bm{p}(\omega)$, we first fix an integer $N_{\text{cheb}} \geq 1$ and discretize the interval $[a, b]$ using the Chebyshev nodes

\begin{equation*}
\omega_n = \frac{b - a}{2} \cos \frac{(2n-1)\pi}{2N_{\text{cheb}}} + \frac{b + a}{2}, \qquad \qquad n=1,...,N_{\text{cheb}}.
\end{equation*}

\n For each of these finitely many values, we can find $\bm{p}(\omega_n) = \left(p_1(\omega_n), ..., p_8(\omega_n) \right) \in X$ and a finite-part operator $\bm{A}^N(\omega_n)$, precisely as in Section~\ref{SECT omega=0}, except that instead of $\omega=0$, we now perform the calculations with $\omega = \omega_n$. In particular, each $p_j(\omega_n) \in \ell^1_\nu$ is a finite sequence of size $N$. We now define $p_j(\omega)$, for $j=1,...,8$, to be a finite Chebyshev series 

\begin{equation*}
p_j(\omega) = c_{j,0} + 2 \sum_{n=1}^{N_{\text{cheb}}} c_{j,n} T_n \left( 2\frac{\omega - b}{b - a} + 1 \right),
\end{equation*}

\n where each of the coefficients $c_{j,n} \in \ell^1_\nu$ is also a finite sequence. These coefficients are obtained by fitting the above function to the calculated set of values $\left\{ p_j(\omega_n) : n=1,...,N_{\text{cheb}} \right\}$ using FFT. We then set $\bm{p}(\omega) = \left(p_1(\omega), ..., p_8(\omega) \right)$, which clearly lies in $X$ for any $\omega \in [a,b]$. Next, since $|T_n(t)| \leq 1$ for $t \in [-1,1]$, we have the estimate

\begin{equation*}
\sup_{\omega \in [a, b]} \|\bm{p}(\omega)\|_X \leq \max_{j=1,...,8} \left\{ \|c_{j,0}\|_{\ell^1_\nu} + 2 \sum_{n=1}^{N_{\text{cheb}}} \|c_{j,n}\|_{\ell^1_\nu} \right\}.
\end{equation*}

\n Similarly, 

\begin{equation*}
\bm{A}^N(\omega) = \bm{a}_{j,0} + 2 \sum_{n=1}^{N_{\text{cheb}}} \bm{a}_{j,n} T_n \left( 2\frac{\omega - b}{b - a} + 1 \right),
\end{equation*}

\n where each $\bm{a}_{j,n} : X \to X$ is a finite-part operator (and can be represented as a matrix) and is computed in a similar way from the values $\bm{A}^N(\omega_n)$ for $n=1,...,N_{\text{cheb}}$. We again have the uniform estimate

\begin{equation*}
\sup_{\omega \in [a, b]} \|\bm{A}^N(\omega)\|_{B(X)} \leq \max_{j=1,...,8} \left\{ \|\bm{a}_{j,0}\|_{B(X)} + 2 \sum_{n=1}^{N_{\text{cheb}}} \|\bm{a}_{j,n}\|_{B(X)} \right\}.
\end{equation*}

Finally, we define $\bm{A}(\omega) = \bm{A}^N(\omega) + \bm{A}_N$, where the tail operator $\bm{A}_N$ is defined as in (H2) in Section~\ref{SECT p and A} and is independent of $\omega$. With the above two estimates and our $Y_0$, $Z_1$ and $Z_2$ bounds from Lemmas~\ref{LEM Y0 and Z_2} and \ref{LEM Z_1}, we now apply Theorem~\ref{THM NK3}. More specifically, we take $a=0$, $b=2.03$, choose $N_{\text{cheb}} = 50$, $N = 90$ and $\nu = 1.08$. We then obtain

\begin{equation*}
\sup_{\omega \in [a, b]} \|\bm{A}^N(\omega)\|_{B(X)} \leq 27.6
\end{equation*}

\n and

\begin{equation*}
Y_0 \leq 2 \cdot 10^{-9}, \qquad Z_2 \leq 59.6 \qquad \text{and} \qquad Z_1 \leq 0.7.
\end{equation*}

\n Taking $r=0.005$ gives $Q(r) \leq -0.0007$. Consequently, Theorem~\ref{THM NK3} implies that for each $\omega \in [0,2.03]$ there exists $\bm{x}(\omega) \in X$ such that $\bm{F}_\omega(\bm{x}(\omega)) = 0$. Moreover,

\begin{equation*}
\sup_{\omega \in [0, 2.03]} \|\bm{x}(\omega) - \bm{p}(\omega)\|_X \leq 0.005.
\end{equation*}

\n We also estimate 

\begin{equation*}
\inf_{\omega \in [0, 2.03]} \|\bm{p}(\omega)\|_X \geq 129.4 \geq r,
\end{equation*}

\n which implies that $\bm{x}(\omega)$ is a nontrivial solution. Finally, the mapping $\omega \mapsto \bm{x}(\omega)$ is smooth because all the quantities $\bm{p}(\omega)$, $\bm{A}(\omega)$ and $\bm{F}_\omega(\bm{p}(\omega))$ depend smoothly on $\omega$, as they are given by finite Chebyshev sums with respect to $\omega$. Further, the operator $\bm{F}_\omega(\bm{x})$ depends smoothly on $(\omega, \bm{x})$. Consequently, its zero $\bm{x}(\omega)$, obtained through a fixed-point equation, also depends smoothly on $\omega$. From the above estimates on $\bm{p}(\omega)$ and $\bm{x}(\omega)$, and the fact that $\bm{\pi}^N \bm{p}(\omega)= \bm{p}(\omega)$ it follows that $\bm{\pi}^N \bm{x}(\omega)$ is uniformly bounded away from zero in any norm. We may conclude that the functions $\bm{\xx}(\omega)$ are uniformly bounded away from $0$ in $L^2(D)\times L^2(D)$ and therefore in $H^2(D)\times L^2(D)$ for $\omega \in [0,2.03]$.

\section*{Acknowledgments}
{The research of FC was partially supported by the NSF Grant DMS--24--06313. The work of MSV was partially supported by NSF grant DMS--22--05912.}

\bibliographystyle{abbrv}
\bibliography{refs}
\end{document}